\numberwithin{equation}{section}
\newcommand\xappa\kappa
\newcommand\yota\iota
\newcounter{consta}
\newcounter{constb}
\newcounter{constc}[section]
\newcommand{\vol}{\operatorname{vol}}
\newcommand{\R}{\mathbb{R}}
\newcommand{\Z}{\mathbb{Z}}
\DeclareFontFamily{OT1}{rsfs}{}
\DeclareFontShape{OT1}{rsfs}{n}{it}{<-> rsfs10}{}
\DeclareMathAlphabet{\mathscr}{OT1}{rsfs}{n}{it}
\newtheorem{thm}{Theorem}[section]
\newtheorem{lem}[thm]{Lemma}
\newtheorem{prop}[thm]{Proposition}
\newtheorem*{lem*}{Lemma}
\newtheorem*{thm*}{Theorem}
\newtheorem*{conj*}{Conjecture}
\newtheorem*{prop*}{Proposition}
\newtheorem{defn*}{Definition}
\newtheorem{ex}[thm]{Example}
\newtheorem{cor}[thm]{Corollary}
\theoremstyle{definition}
\newtheorem{defn}[thm]{Definition}
\theoremstyle{remark}
\newtheorem{rem}[thm]{Remark}
\newtheorem{obs}[thm]{Observation}
\newtheorem*{obs*}{Observation}
\newtheorem*{rem*}{Remark}
\theoremstyle{definition}\newtheorem*{acknowledgments}{Acknowledgments}
\begin{document}
	\title[Applications of joinings to sparse equidistribution]{Applications of joinings to sparse equidistribution problems}
	\author{Asaf Katz}
	\address{Department of Mathematics, University of Michigan, Ann Arbor, Michigan 48103, US}
	\email{asafk@umich.edu}
	
	\date{}
	
	\begin{abstract} We show how classification of joinings of two dynamical systems can be used  in some sparse equidistribution problems in homogeneous dynamics, and by using recent quantitative results about equidistribution theorems, one can deduce some quantitative estimates for such problems.
	\end{abstract}
	\maketitle
	\section{Introduction}
	In a landmark paper~\cite{furstenberg67}, H. Furstenberg introduced the notion of joinings of two dynamical systems and the concept of disjoint dynamical systems.
	Ever since, this property has played a major role in the field of dynamics, leading to many fundamental results.
	
	Given two ergodic measure-preserving systems $(X,\beta,\mu,T), (Y,\gamma,\nu,S)$, their set of joinings appear naturally when one considers the so-called ``Type II'' bi-linear sum $\mathcal{S}_{N,f_{1},f_{2}}(x,y)$ for two functions $f_{1} \in L^{\infty}(X,\mu), f_{2}\in~L^{\infty}(Y,\nu)$
	\begin{equation*}
		\mathcal{S}_{N,f_{1},f_{2}}(x,y)=\frac{1}{N}\sum_{n=0}^{N-1}f_{1}\left(T^{n}.x\right) f_{2}\left(S^{n}.y\right),
	\end{equation*}
	as by the ergodic theorem almost-surely we have 
	\begin{equation*}
		\mathcal{S}_{N,f_{1},f_{2}}(x,y) \to \int_{X\times Y}f_{1}\otimes f_{2}dJ_{x,y},
	\end{equation*}
	where $J_{x,y}$ is a joining of $\mu,\nu$ supported on $\overline{\left\{ \left(T^{n}.x,S^{n}.y\right)\right\}_{n\in \mathbb{N}}}$.
	Such sums appear naturally in equidistribution problems in the following situation - when one tries to estimate a given average of numbers $\frac{1}{N}\sum_{n=0}^{N-1}a_{n}$ by means of Van-der-Corput trick, which involves an extra smoothing of the sum (in a long range) and applying the Cauchy-Schwarz, then one needs to handle ``Type II sums'' of the form
	\begin{equation*}
		\frac{1}{(2H+1)^2}\sum_{\lvert h_{1}\rvert,\lvert h_{2}\rvert \leq H}\frac{1}{N}\sum_{n=0}^{N-1}a_{n+h_1}\cdot\overline{a_{n+h_{2}}},    
	\end{equation*}
	which may be interpreted by means of joining of the system with itself (and time-changes of).
	
	In this article we are interested in generalizing the following classical theorem:
	Set $\sigma_{R}$ to the probability one measure supported on a sphere of radius $R$ in $\mathbb{R}^{d}$.
	Let $\mathbb{T}^{d}=\mathbb{R}^{d}/\mathbb{Z}^{d}$ be the $n$-dimensional torus.
	We define the convolution operator associated to $\sigma_{R}$ as
	\begin{equation*}
		\sigma_{R}(f)(x_0)=\int_{v}f(v.x_0)d\sigma_{R}(v),
	\end{equation*}
	where the convolution is pushed forward by means of the natural $\mathbb{R}^{d}$ action on $\mathbb{T}^d$ by translations, namely $v.x_0=v+x_0 \mod \mathbb{Z}^{d}$, for any $f\in L^{\infty}(\mathbb{T}^{d})$.
	A classical theorem states that $\sigma_{R}(f)$ converges (as $R$ tends to infinity) to $\int_{\mathbb{T}^d}fdm$, where $m$ is the Haar probability measure defined on $\mathbb{T}^{d}$.
	
	We will show several generalizations of such a result in the settings of group actions on homogeneous spaces.
	
	Our first theorem, which is only qualitative in nature as it relies on Ratner's theorems, deals with the analogous situation of equidistribution of ''spherical averages`` along unipotent actions in homogeneous spaces associated with semi-simple Lie groups.
	Assume that $H\simeq \mathbb{R}^{n}$ acts by unipotent translations on a homogeneous space $X=G/\Gamma$ where $G$ is a connected semi-simple Lie group and $\Gamma$ is a lattice in $G$.
	Moreover, in the case where $\Gamma$ is non-uniform, we will assume that $\Gamma$ is arithmetic.
	Moreover, we assume that $H$ is a  \emph{horospherical group} $G_{a}^{+} = \left\{g\in G \mid a^n g a^{-n} \to e\text{ as }n\to\infty \right\}$, where $a$ is some $\mathbb{R}$-diagonalizable element.
	We define the compact subgroup $M\leq G$ as follows:
	\begin{equation*}
		M=Z^{\circ}_{K}(a),
	\end{equation*}
	where $K\leq G$ stands for the maximal compact subgroup of $G$.

	We define the $M$-spherical average to be
	\begin{equation*}
		\sigma_{R}(f)(x)=\int_{m\in M}f(m.a_{\log R}ua_{-\log R}.m^{-1}.x)dm,
	\end{equation*}
	for any given $u\in H\setminus\{e\}$, $f\in C_{c}(X)$.
	From now on, we will assume that $\dim(M)\geq 1$. Moreover by definition $M$ is connected.
	The main examples to keep in mind are $G=SL_{n+1}(\R)$ or $G=SO(n+1,1)(\R)$ with $M=SO(n)(\R)$.
	
	We note here that this choice of $M$ \emph{excludes} the case of $G=SL_{2}(\R)\times~\cdots~\times~SL_{2}(\R)$ which was discussed in~\cite{ubis2016effective}, as $M$ is trivial in that case.
	Furthermore, choosing a generic Cartan element $a=\text{diag}(e^{a_1},\ldots, e^{a_n})\in~SL_{n}(\R)$  with $a_1>~\ldots~>a_n$ would lead to $M$ being trivial. Hence our theorem will only apply to specific cases.
	\begin{thm}\label{thm:spherical-qual}
		In the previous notations, let $x\in X$ be an \emph{H}-generic point, namely $\overline{H.x}=~X$.
		Assume moreover that either $X$ is compact, or $\Gamma$ is arithmetic.
		Then for every $f\in~C_{c}(X)$ we have $\sigma_{R}(f)(x)~\to~\int_{X}fd\mu$ as $R\to \infty$.
	\end{thm}
	
	\begin{ex}
		Let $G=PSL_{d}(\mathbb{R})$ with $d\geq 3$.
		Let $\Gamma\leq G$ be a lattice.
		Define $H$ to be a minimal horospherical subgroup of $G$.
		Then $H\simeq~\mathbb{R}^{d-1}$, as can be seen by choosing for example $a=\text{diag}\left(e^{1/d},\ldots,e^{1/d},e^{-(d-1)/d}\right)$.
		$M$ in this case is isomorphic to $SO(d-1)(\mathbb{R})$, and the equation of conjugation by $M$ over $H$ amounts to rotation in the sense of multiplication by the $M$ matrix in $\mathbb{R}^{d-1}$.
		The ''ball averages`` 
		\begin{equation*}
			\beta_{R}(f)(x)=\frac{1}{R}\int_{r=0}^{R}\sigma_{r}f(x)dr,
		\end{equation*}
		are known to equidistribute (quantitatively, subject to diophantine conditions over the base point) for any $H$-generic point.
		Our theorem shows that the sparser averages $\sigma_{r}(f)(x)$ equidistribute (qualitatively) by themselves as $r$ grows.
	\end{ex}
	
	Our next theorem deals with quantitative version of the above theorem.
	Unfortunately, in the semisimple case, we cannot recover a full quantitative version of the spherical equidistribution theorem, but only for a thicker kind of averages.
	\begin{thm}\label{thm:annuli-equi}
		Let $G$ be a linear semi-simple Lie group and let $\Gamma\leq~G$ be a \emph{lattice} in $G$.
		Denote $X=G/\Gamma$ the associated homogeneous space and let $m$ denote the probability measure defined on $G/\Gamma$ which is induced by a Haar measure on $G$.
		Let $H\leq G$ be an abelian subgroup.
		Assume that $x\in X$ is an $H$-generic point, namely $\overline{H.x}=X$ which has an equidistribution rate of $\gamma_{\text{E}}>0$ with respect to function of bounded Sobolev norm of order $K$ (see Definition~\ref{def:quantitative-equi-rate}.
		Then there exists an exponent $\omega_{\text{critical}}<1$ such that for all other exponents $\omega_{\text{critical}}<\omega\leq 1$ we have the following average over the annulus of inner radius $R/2$ and outer radius $R/2+R^{\omega}$, namely
		\begin{equation*}
			A_{R,\omega}(f)(x) = \frac{1}{R^{\omega}}\int_{t=0}^{R^{\omega}}\sigma_{R/2+t}(f)(x)dt,
		\end{equation*}
		equidistributes with a rate $\delta=\delta(\gamma_{\text{E}},\omega)>0$, namely
		\begin{equation*}
			\left\lvert A_{R,\omega}(f)(x)-\int_{X}fdm \right\rvert \ll_{f} R^{-\delta}.
		\end{equation*}
	\end{thm}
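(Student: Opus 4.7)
The plan is to realise $A_{R,\omega}(f)(x)$ as a spatial average of $f$ over an annular region in $H$, and then to express this annular average as a normalised difference of two $H$-ball averages, on each of which the hypothesised equidistribution rate of $x$ can be applied directly.

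First, since $M$ centralises $a$, one has $m a_{\log s} u a_{-\log s} m^{-1} = a_{\log s}(m u m^{-1})a_{-\log s}$, so $\sigma_s(f)(x)$ is the $M$-average of $f$ along the ``sphere'' $\{a_{\log s}(m u m^{-1})a_{-\log s}\cdot x : m\in M\}$, which sits inside the $H$-orbit of $x$ at radius $\asymp s^{\alpha}$ for an exponent $\alpha>0$ read off the weights of $a$ on $\Lie(H)$. Integrating over $s\in [R/2, R/2+R^{\omega}]$ and applying a polar-coordinate change of variables on $H$ identifies $A_{R,\omega}(f)(x)$, up to smooth bounded factors, with the genuine spatial average
\[
\frac{1}{\vol(A)}\int_A f(v\cdot x)\,dv
\]
over an annulus $A\subset H$ of inner radius $r_1 \asymp R^{\alpha}$, thickness $\asymp R^{\alpha-1+\omega}$, and therefore volume $\asymp R^{\alpha d - 1 + \omega}$, where $d=\dim H$.

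Writing the annulus average as the normalised difference
\[
A_{R,\omega}(f)(x) = \frac{\vol(B_{r_2})\,\overline{B}_{r_2}(f)(x) - \vol(B_{r_1})\,\overline{B}_{r_1}(f)(x)}{\vol(B_{r_2})-\vol(B_{r_1})},
\]
where $\overline{B}_{r}(f)(x)$ is the normalised $H$-ball average of radius $r$, the hypothesis $\bigl|\overline{B}_{r}(f)(x) - \int_X f\,d\mu\bigr| \lesssim \|f\|_{\Sob,K}\, r^{-\gamma_{\text{Equidistribution}}}$ combined with $\vol(B_{r_i}) \asymp R^{\alpha d}$ yields
\[
\Bigl| A_{R,\omega}(f)(x) - \int_X f\,d\mu \Bigr| \lesssim \|f\|_{\Sob, K}\cdot R^{\,1-\omega - \alpha\gamma_{\text{Equidistribution}}}.
\]
Setting $\omega_{\mathrm{critical}} := \max\{0,\, 1-\alpha\gamma_{\text{Equidistribution}}\} < 1$ makes this decay polynomially for every $\omega_{\mathrm{critical}} < \omega \leq 1$, with rate $\delta = \omega + \alpha\gamma_{\text{Equidistribution}} - 1 > 0$.

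The main technical obstacle lies in the polar decomposition of the first step. For a general semisimple $G$, the group $M$ does not act on $H$ by Euclidean rotations, and the conjugation $\phi_s = a_{\log s}(\cdot)a_{-\log s}$ may be anisotropic across several weight spaces with distinct exponents, so identifying the image of the $(m,s)$-parametrisation with a genuine annulus and controlling the associated Jacobian requires care. When $u$ lies in a single weight space and $M$ acts transitively on the unit sphere of that weight space (as in the $PSL_d$ example given above), the polar decomposition is classical; in the general case one works with the weighted measure produced by the parametrisation, observing that the weight factor is uniformly bounded by compactness of $M$ and the boundedness of the direction set of $u$, and absorbs this factor into the Sobolev-seminorm estimate without affecting the exponent $\delta$.
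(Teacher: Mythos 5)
Your argument is essentially correct, but it is a genuinely different --- and more elementary --- route than the paper's. The paper never differences ball averages: it expands the sampled function $F_{R}(v)=f(u_{R\cdot v}.x_0)$ in a Fourier series on a cube, bounds \emph{every} Fourier coefficient by $R^{-\gamma'}$ uniformly in the frequency via the quantitative disjointness of the $H$-flow from Kronecker systems (the twisted-average bound of Theorem~\ref{thm:venkatesh}), mollifies the annulus kernel by $\phi_{\delta}$, truncates the series, and invokes the Bessel decay $\lvert \hat{A_{R,\omega}}(z)\rvert \ll \lVert R\cdot z\rVert^{-(d-1)/2}$ before optimizing $\delta$, arriving at $\omega_{\text{critical}}=1-\tfrac{2\gamma'}{d^2+3d+4}$. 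Your telescoping of the solid annulus into two ball averages needs only the \emph{untwisted} equidistribution rate at the radii $R/2$ and $R/2+R^{\omega}$, and it gives the cleaner and in general stronger threshold $\omega>1-\gamma_{\text{Equidistribution}}$. What the paper's heavier Fourier machinery buys is precisely what your trick cannot reach: the same coefficient bound drives the negative-order Bochner--Riesz result (Theorem~\ref{thm:Bocher-Riesz}) and, in spirit, genuinely spherical averages, where the average is not expressible as a difference of solid ball averages. Note also that in the proof of this theorem the paper treats $\sigma_{s}$ simply as the Euclidean sphere average over $\{u_{v}:\lVert v\rVert=s\}$ in $H\simeq\mathbb{R}^{d}$, so the anisotropy/$M$-conjugation worries of your last paragraph are moot under that reading and you may take your exponent $\alpha=1$.

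There is, however, one real gap to close: $A_{R,\omega}$ weights the spheres \emph{uniformly in the radial parameter} $t$, whereas the Lebesgue average over the solid annulus weights the sphere of radius $s$ by $s^{d-1}ds$. These differ by a density that is bounded above and below but not constant, and a bounded density cannot simply be ``absorbed into the Sobolev-seminorm estimate'': equidistribution of the unweighted solid-annulus average does not control a weighted version, since the error term could correlate with the weight. Two fixes are available. For $\omega<1$ the density on the annulus is $1+O(R^{\omega-1})$, so the two averages differ by $O(\lVert f\rVert_{\infty}R^{\omega-1})$ --- an extra error you must record, and one which degenerates at the endpoint $\omega=1$ that the theorem also claims. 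Better, integrate by parts in the radial variable: writing $S(r)=\int_{\lVert v\rVert\le r}f(u_{v}.x)dv$ and $\beta_{r}(f)(x)=S(r)/\vol(B_{r})$, one gets $\int_{0}^{R^{\omega}}\sigma_{R/2+t}(f)(x)dt=\tfrac{1}{d}\left(r_{2}\beta_{r_{2}}(f)(x)-r_{1}\beta_{r_{1}}(f)(x)\right)+\tfrac{d-1}{d}\int_{r_{1}}^{r_{2}}\beta_{r}(f)(x)dr$ with $r_{1}=R/2$, $r_{2}=R/2+R^{\omega}$; applying the hypothesis to each $\beta_{r}$ yields $\lvert A_{R,\omega}(f)(x)-\int_{X}f\,dm\rvert \ll_{f} R^{1-\omega-\gamma_{\text{Equidistribution}}}+R^{-\gamma_{\text{Equidistribution}}}$ uniformly for all $\omega\le 1$, so your conclusion with $\omega_{\text{critical}}=\max\{0,1-\gamma_{\text{Equidistribution}}\}$ stands, including at $\omega=1$.
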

	The proof relies on sieving the larger ``ball`` average 
	$$\beta_{R}(f)(x)=\frac{1}{R}\int_{t=0}^{R}\sigma_{t}(f)(x)dt$$ using quantitative disjointness result of this $\mathbb{R}^{d}$-action from the Kroncker system.
	Another application of the quantitative disjointness result, similar in spirit to Venkatesh's, regarding singular averages associated with Bochner-Riesz means appears in Theorem~\ref{thm:Bocher-Riesz}.
	We remark here that the proof only requires a quantitative equidistribution theorem for the $H$-orbit, $H.x$, which by current results known only (in full generality) for the case of horospherical actions.

	Our third theorem deals with quantitative dilations of ''spheres`` in nilmanifolds.
	\begin{thm}\label{thm:nil-equi}
		Let $X=~N/\Lambda$, where $N$ is a nilpotent connected Lie group and $\Lambda\leq N$ is a \emph{lattice}, admitting an $\mathbb{R}^{d}$-action by translations.
		Assume moreover that this action is minimal.
		We define the sphere average of radius $R>0$ around $x_0\in X$ as
		\begin{equation*}
			\sigma_{R}f(x_0)=\frac{1}{\vol(\partial B_{R})}\int_{\lVert \overline{v}\rVert=R}f\left(u_{v}.x_0\right)d\overline{v},
		\end{equation*}
		for any $f\in C(X)$.
		Then for any point $x_0$ which satisfy effective equidistribution theorem, the sphere averages equidistribute towards $\int_{X}f(x)dm$ as $R\to \infty$ in a quantitative manner.
	\end{thm}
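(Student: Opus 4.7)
\textbf{Proof plan for Theorem~\ref{thm:nil-equi}.}
The approach is to combine a Kirillov-theoretic spectral decomposition of $L^{2}(X)$ with the classical Bessel-type decay of the Fourier transform of the Euclidean sphere measure, together with a Diophantine analysis of the frequencies that appear. Write the $\mathbb{R}^{d}$-action as $v \cdot x = \rho(v) x$ for a Lie group homomorphism $\rho : \mathbb{R}^{d} \to N$; since $\mathbb{R}^{d}$ is abelian, the infinitesimal generators $X_{1}, \ldots, X_{d} \in \Lie(N)$ commute, and $\rho(\mathbb{R}^{d})$ is an abelian connected subgroup of $N$. Minimality forces the projection of $\rho(\mathbb{R}^{d})$ to the maximal toral factor $Z = N / [N,N] \Lambda$ to be dense, hence equidistributed.

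First I would reduce to a single irreducible $N$-representation. Since $X$ is compact, one has the discrete decomposition $L^{2}(X) = \bigoplus_{\pi} m(\pi) H_{\pi}$ under the left $N$-action, indexed (by Kirillov) by integral coadjoint orbits. On each summand the sphere-average operator equals
\begin{equation*}
\sigma_{R}\big|_{H_{\pi}} \;=\; \int_{\|v\|=R} \pi(\rho(v))\, d\sigma(v).
\end{equation*}
Commutativity of the $X_{i}$'s makes $v \mapsto \pi(\rho(v))$ a unitary representation of $\mathbb{R}^{d}$, with a projection-valued spectral measure $E_{\pi}$ on $(\mathbb{R}^{d})^{*}$; by SNAG the operator above becomes $\int \hat{\sigma}_{R}(\lambda)\, dE_{\pi}(\lambda)$, whose operator norm is bounded by $\sup_{\lambda \in \supp E_{\pi}} |\hat{\sigma}_{R}(\lambda)|$. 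Plugging in the stationary-phase estimate $|\hat{\sigma}_{R}(\lambda)| \lesssim (R \|\lambda\|)^{-(d-1)/2}$ (valid for $d \geq 2$) reduces the problem to a uniform lower bound $\|\lambda\| \gtrsim M(\pi)^{-\alpha}$ for all $\lambda \in \supp E_{\pi}$ and all non-trivial $\pi$, where $M(\pi)$ is a complexity parameter of the underlying coadjoint orbit.

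The qualitative side of this bound is immediate: ergodicity of the $\rho(\mathbb{R}^{d})$-action (equivalent here to minimality) forbids $\pi \circ \rho$ from having any invariant vector for non-trivial $\pi$, so $0 \notin \supp E_{\pi}$. The quantitative side uses that the set $\supp E_{\pi}$ consists of $\rho^{*}$-images of integral points on Kirillov orbits, which form a polynomially structured subset of $(\mathbb{R}^{d})^{*}$; combined with a Diophantine hypothesis on the projection of $\rho$ to the toral factor $Z$ (the genuine quantitative input), one obtains the desired polynomial lower bound. For $f$ of bounded Sobolev norm $\|f\|_{\Sob^{K}}$, the isotypic components $f_{\pi}$ decay faster than any polynomial in $M(\pi)$, so summing the per-$\pi$ bound and balancing the Bessel decay against the growth of $M(\pi)$ produces a rate of the form $|\sigma_{R} f(x_{0}) - \int_{X} f\, dm| \lesssim R^{-\delta} \|f\|_{\Sob^{K}}$.

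The main obstacle is the uniform Diophantine lower bound on $\supp E_{\pi}$ across all non-trivial irreducibles. For one-dimensional characters of $N$, which factor through $Z$, this is a standard Diophantine condition for a linear action on a torus and follows from the Bessel estimate directly. For higher-dimensional Kirillov orbits, however, one must transport the toral Diophantine condition through the coadjoint correspondence and keep explicit track of how $M(\pi)$ relates to the minimum-norm frequency in $\supp E_{\pi}$; this is where the non-abelian structure of $N$ genuinely enters, and constitutes the technical heart of the quantitative statement.
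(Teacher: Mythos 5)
There is a genuine gap, and it sits exactly where you locate ``the technical heart.'' The qualitative step you call immediate --- that ergodicity forbids invariant vectors and hence $0 \notin \supp E_{\pi}$ for every non-trivial $\pi$ --- is false for the infinite-dimensional Kirillov components, which are precisely the non-abelian part of the problem. Absence of invariant vectors only rules out an atom of $E_{\pi}$ at $0$, not $0$ lying in the support. Concretely, for the Heisenberg nilmanifold with the action generated by horizontal directions (the relevant case, since minimality forces the acting directions to project densely to the horizontal torus), the restriction of an infinite-dimensional irreducible $\pi_{t}$ to $\rho(\mathbb{R}^{d})$ has Lebesgue spectrum whose support is the projection of the coadjoint plane $\{\xi : \xi(Z)=t\}$ to the acting directions, and this projection contains $0$. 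Hence $\sup_{\lambda \in \supp E_{\pi}} |\hat{\sigma}_{R}(\lambda)| = 1$ and the SNAG operator-norm bound gives no decay at all on those components; one would instead have to control the spectral measure of the individual vector $f_{\pi}$ near $\lambda = 0$ (a quantitative equidistribution/mixing statement inside the non-abelian component), which is the actual difficulty and is not supplied by your plan. Your related description of $\supp E_{\pi}$ as ``$\rho^{*}$-images of integral points on Kirillov orbits'' is also inaccurate: the support is the closure of the $\rho^{*}$-projection of the whole real coadjoint orbit, typically a continuum passing through or arbitrarily near the origin. So even granting your deferred Diophantine lemma for the toral characters, the argument as structured only handles the abelian part of $L^{2}(X)$.

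Two further points of comparison with the paper's proof. First, you import ``a Diophantine hypothesis on the projection of $\rho$ to the toral factor'' as an extra assumption; the theorem assumes only minimality, and the paper's argument is built so that no per-frequency uniform lower bound on $\lVert \rho^{*}\lambda \rVert$ is invoked: the sphere average is first replaced by translated tangent-plane averages (Proposition~\ref{prop:approx}), then the Green--Tao quantitative equidistribution theorem is applied in each direction, and for each of the polynomially many obstructing horizontal characters one bounds the measure of the set of sphere directions that are nearly resonant with it (a derivative estimate), so the averaging over directions itself substitutes for a Diophantine bound valid uniformly in direction. Second, the non-abelian components are treated there not spectrally but by the nilcharacter/van der Corput reduction (as in the $2$-step example at the end of \S4), which lowers the step at the cost of a square-root loss and lands back in the abelian estimate. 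If you want to salvage the representation-theoretic route, you would need a quantitative substitute for the false support claim --- e.g.\ explicit bounds on the spectral density of smooth vectors in each $\pi$ near $0$, uniform in the Kirillov parameter --- which is a substantial piece of work not sketched in the proposal.
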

	See Theorem~\ref{thm:green-tao}, Definition~\ref{def:quantitatie-equi-rate-nil}, Remark~\ref{rem:effective-equi} and Equation~\eqref{eq:effective-equi-nil-estimate} for discussion regarding the quantitative estimates achieved.
	We note that in the case where the nilmanifold $N/\Lambda$ is the $n$-dimensional torus $\mathbb{T}^n=\mathbb{R}^n/\mathbb{Z}^n$, our theorem is merely the well-known estimate for decay of spherical Bessel functions (c.f. \cite{burton}), and as such one may interpret the above estimate as a decay estimate for ``nilpotent Bessel functions''.
	In recent work~\cite{brynakrashah} Kra, Shah and Sun have considered equidistribution of dilated curves in nilmanifolds, it is highly likely that an extension of our methods will be applicable there as well.
	
	\section*{Organization of the paper}
	In \S2, we prove Theorem~\ref{thm:spherical-qual}, using the linearization technique of Dani-Margulis.
	In \S3, we prove Theorem~\ref{thm:annuli-equi}, using quantitative disjointness theorem between Kronecker systems and horospherical flows.
	In \S4, we prove Theorem~\ref{thm:nil-equi} by means of analysis of nilcharacters over joinings on the related nilflows.
	
	\begin{acknowledgments}
		The author is indebted to Elon Lindenstrauss for introducing the problem of spherical equidistribution to him and many discussions about the subject of quantitative equidistribution.
		The author also wishes to thank Alex Eskin for fruitful discussions and to Kevin Hughes and Boaz Haberman for useful advice about harmonic analysis.
		The author thanks the referees for carefully the paper and providing useful comments.
	\end{acknowledgments}
	
	\section{Equidistribution of spheres in horospheres}
	In this section, we provide a \emph{qualitative proof} for the equidistribution of large spheres in horospheres. We note here that the proof is not quantitative in its nature, as it relies on Ratner's theorem and the linearization technique of Dani-Margulis.
	
	Let $G$ be a fixed semisimple real Lie group.
	Let $\Gamma\leq G$ be a lattice.
	Denote $X=G/\Gamma$ and denote by $\mu$ the corresponding probability measure on $X$ associated with the Haar measure of $G$.
	We fix an $\mathbb{R}$-diagonalizable $a\in G$ element such that $H=\left\{ g\in G \mid a^{n}ga^{-n} \xrightarrow{n\to -\infty} e \right\}$.
	Moreover, assume $H\simeq R^{d}$.
	We denote by $K$ the maximal compact compact subgroup of $G$.
	We denote by $M=Z_{K}(a)\leq K$.
	\begin{obs}
		The subgroup $M$ normalizes $H$.
	\end{obs}
	\begin{proof}
		For any $h\in H, m\in M$ we have
		\begin{equation*}
			a^{n}(mhm^{-1})a^{-n} = m(a^{n}ha^{-n})m^{-1}\to mem^{-1}=e,
		\end{equation*}
		as $H$ is horospherical with respect to the group element $a$.
	\end{proof}
	A point $x\in X$ is called \emph{$H$-generic} if $\overline{H.x} = X$, we note that by the ergodic theorem, $\mu$-a.e. point $x\in X$ is $H$-generic.
	We define the following spherical averaging operator $\sigma_{R}f(x)$ as follows:
	\begin{equation}\label{eq:M-avg}
		\begin{split}
			\sigma_{R}(f)(x) &= \int_{\lVert \hat{v} \rVert=1}f(u_{R\cdot \hat{v}}.x)d\sigma_{\hat{v}} \\
			&= \int_{m\in M}f(m.u_{R\cdot \hat{v}}.m^{-1}.x)dm_{M},
		\end{split}
	\end{equation}
	for any normalized vector $\hat{v}$ in $\mathbb{R}^{d}$, where $dm_{M}$ stands for the normalized Haar probability measure on the compact subgroup $M\leq G$.

	\subsection{Non-divergence of dilates of $M$-averages}
	Our first theorem deals with escape of mass under this averaging operator.
	We will use the terminology and notations of the profound non-divergence theorem proven by Kleinbock and Margulis in~\cite{KleinbockMargulis}.
	For simplicity of presentation, we will consider the case of $X=SL_{n+1}(\R)/SL_{n+1}(\Z)$. The case of any other arithmetic quotient follows easily from that case.
 In this settings, the horospherical subgroup is isomorphic to $R^{n}$, with the choice of a diagonal element $a=\text{diag}(e^{t/2},e^{-t/2\cdot n},\ldots, e^{-t/2\cdot n})$.
	
	\begin{defn}
		A \emph{primitive subgroup} $\Delta\leq \mathbb{Z}^k$ is a subgroup such $\mathbb{R}\otimes \Delta \cap \Z^k = \Delta$.
	\end{defn}
	We may identify each such primitive subgroup $\Delta$ with a vector in $\bigwedge^{\text{rank}(\Delta)}\mathbb{Z}^{n+1} \leq \bigwedge^{\text{rank}(\Delta)}\mathbb{R}^{n+1}$ by means of an exterior product of any basis.
	This gives rise to a norm of a primitive subgroup, as the norm of any such basis realization in the exterior product.
	Let $\Delta$ be a primitive $\mathbb{Z}^{n}$-subgroup, define the function $\Phi_{x,\Delta}(v):~\mathbb{R}^{n}\to~\mathbb{R}$ as 
	\begin{equation*}
		\Phi_{x,\Delta}(v) = \lVert u_{v}.x.\Delta \rVert, 
	\end{equation*}
	and the assorted spherical function $\Phi_{x,\Delta,R}(v):R\cdot S^{n-1}\to\mathbb{R}$ as $$\Phi_{x,\Delta,R}=~\Phi_{x,\Delta}\mid_{R\cdot S^{n-1}}.$$
	
	We will need the following definition.
	\begin{defn}[(C,$\alpha$)-good function]
		Let $B\subset\mathbb{R}^{n}$ be a convex subset. A function $f:B \to \mathbb{R}$ is called \emph{(C,$\alpha$)-good} for some $C,\alpha>0$ if for any $B'\subset B$ open and $\varepsilon>0$ one have
		\begin{equation*}
			meas\left\{ x\in B' \mid \lvert f(x) \rvert < \varepsilon \right\} \leq C\cdot \left(\frac{\varepsilon}{\sup_{x\in B'}\lvert f(x) \rvert} \right)^{\alpha} \cdot meas(B').
		\end{equation*}
	\end{defn}
	
	\begin{lem}[The functions $\{\Phi_{x,\Delta,R}\}$ are $(C,\alpha)$-good]\label{lem:c,a-good}
		The maps $\left\{\Phi_{x,\Delta,R} \right\}_{R}$ are $(C,\alpha)$-good maps.
	\end{lem}
	This follows immediately from \cite[Theorem~$2.7$, Proposition~$2.8$]{aka}.
	Note that in view of \cite[Proposition~$3.4$]{KleinbockMargulis}, it is essentially enough to show that the first $\ell$ derivatives of the map $v\mapsto u_{v}.x\simeq \mathbb{R}^{n}$ span $\mathbb{R}^n$. As $S^{n-1}$ is embedded as a regular surface in $\mathbb{R}^{n}$ (except the poles), its derivatives span the tangent space.
    It is an easy computation to show that the second derivatives are not contained in the tangent plane as well.
    alternatively, as $R\cdot S^{n-1}$ is an analytic connected subvariety of $\mathbb{R}^{n}$, if its derivative not span $\mathbb{R}^{n}$, then it is contained in a proper affine subspace of co-dimension at least $1$, which clearly does not hold.

    We note that with our choice of $H\simeq \mathbb{R}^{n}$. the action of the horospherical subgroup $H$ over a vector $v\in \bigwedge\mathbb{R}^{n+1}$ was calculated in~\cite[\S5]{KleinbockMargulis}. We recall the computation below.
    Given a subset of indices $I=\left\{i_{1}<i_{2}<\ldots < i_{k} \right\}$, we define the vector $e_{I}$ as $e_{I} = \bigwedge_{j=1}^{k}e_{i_{j}}$, with $e_{\phi}=1$. Vectors of the form $\left\{ e_{I} \right\}_{I\subset \left\{ 0,\ldots, n\right\}}$ span the exterior algebra $\bigwedge \mathbb{R}^{n+1}$.
    We define a norm over $\bigwedge \mathbb{R}^{n}$ by
    \begin{equation*}
        \left\lVert v\right\rVert = \left\lVert \sum_{I} \alpha_{I}v_{I} \right\rVert = \max_{I} \left\lvert \alpha_{I}\right\rvert.
    \end{equation*}
    We note that for $y=(y_{1},\ldots, y_{n})\in \mathbb{R}^{n}$ we have:
    \begin{equation*}
        u_{y}.e_{I} = \begin{cases}
        e_{I}  &0\in I \\
        e_{I}+\sum_{i\notin I}(-1)^{\ell(I,i)} y_{i}\cdot e_{I\cup\{i\}\setminus\{0\}} &0\notin I,
    \end{cases}
    \end{equation*}
    where $\ell(I,i)$ is the number of elements in $I$ strictly between $0$ and $i$.
    As a result, we get that for $v=\sum_{I}\alpha_{I}v_{I}$
    \begin{equation}\label{eq:wedge-action}
        u_{y}.v = \sum_{0\notin I}\alpha_{I}v_{I} + \sum_{0\in I}\left(\alpha_{I}+\sum_{i\notin I}(-1)^{\ell(I,i)} y_{i}\cdot \alpha_{I\cup\{i\}\setminus\{0\}} \right)\cdot v_{I}.
    \end{equation}

    \begin{lem}
        For any $\Delta\in Prim(\mathbb{Z}^{n+1})$, for all $R\geq 0$, we have that
        \begin{equation*}
            \left\lVert \Phi_{x,\Delta}(y) \right\rVert = \max_{y\in R\cdot S^{n-1}}\left\lVert u_{y}\cdot x\Delta \right\rVert \geq \left\lVert x\Delta \right\rVert.
        \end{equation*}
    \end{lem}
    \begin{proof}
        We denote $v=x\Delta$ to be the vector representing $x\Delta$ in $\bigwedge \mathbb{R}^{n+1}$, so $v=\sum_{I} \alpha_{I}e_{I}$ for $\alpha_{I}=\alpha_{I}(x\Delta)$.
        It follows from~\eqref{eq:wedge-action} that
        \begin{equation*}
            \left\lVert u_{y}.v\right\rVert \geq \max\left\{\left(\left\lvert \alpha_{I}\right\rvert\right)_{\text{ }0\notin I}, \left(\left\lvert \alpha_{I}+\sum_{i\notin I}(-1)^{\ell(I,i)} y_{i}\cdot \alpha_{I\cup\{i\}\setminus\{0\}} \right\rvert\right)_{\text{ }0\in I} \right\}.
        \end{equation*}
        We consider two cases. If $\left\lVert v\right\rVert = \left\lvert \alpha_{I}\right\rvert$ for $I$ such that $0\notin I$, then then above inequality yields $\left\lVert u_{y}.v\right\rVert \geq \left\lvert \alpha_{I}\right\rvert = \left\lVert v\right\rVert$ for all $y\in\mathbb{R}^{n}$.
        In the other case, $\left\lVert v\right\rVert = \left\lvert \alpha_{I}\right\rvert$ for $I$ such that $0\in I$.
        Consider such a subset $I$, pick $y\in R\cdot S^{n-1}$ to be $y=\pm(0,\ldots, R,0,\ldots, 0)$, such that $R$ is in the $i$'th entry for some index $i\in I$.
        We get that for this choice of $y$, with the appropriate sign with respect to $(-1)^{\ell(I,i)}\cdot \alpha_{I\cup\{i\}\setminus\{0\}}$
        \begin{equation*}
        \begin{split}
            \left\lvert \alpha_{I}+\sum_{i\notin I}\pm y_{i}\cdot \alpha_{I\cup\{i\}\setminus\{0\}} \right\rvert &= \left\lvert\alpha_{I} \pm R\cdot \alpha_{I\cup\{i\}\setminus\{0\}}\right\rvert \\
            &= \left\lvert\alpha_{I}\right\rvert + R\cdot \left\lvert\alpha_{I\cup\{i\}\setminus\{0\}}\right\rvert \\
            &\geq \left\lvert \alpha_{I}\right\rvert,
        \end{split}
        \end{equation*}
        which proves the claim in this case.
    \end{proof}

    A closer inspection of the above proof actually yields the following general theorem:
    \begin{thm}
        Assume that $f:U\to \mathbb{R}^{n}$ is a map such that $f(U)$ contains \emph{antipodal points} on all the axis of $\mathbb{R}^{n}$, then for any $v\in \bigwedge \mathbb{R}^{n+1}$ we have
        \begin{equation*}
            \sup_{x\in U}\left\lVert u_{f(x)}.v \right\rVert \geq \left\lVert v \right\rVert.
        \end{equation*}
    \end{thm}

    \begin{cor}[The functions $\{\Phi_{x,\Delta,R}\}$ are $\rho$-big]\label{cor:sup-rho}
        For every $\Delta \in Prim(\mathbb{Z}^{n})$, for any $x\in SL_{n}(\mathbb{R})$ and $R\geq 0$ we have that
        \begin{equation*}
            \max\left\lVert \Phi_{x,\Delta}(v) \right\rVert = \max_{y\in R\cdot S^{n-1}} \left\lVert u_{y}.x.\Delta \right\rVert \geq \left\lVert x.\Delta\right\rVert.
        \end{equation*}
        In particular, if we set $\rho = \min_{\Delta \in Prim(\mathbb{Z}^{n}}\left\lVert x.\Delta\right\rVert$, we have that
        \begin{equation*}
            \max_{y\in R\cdot S^{n-1}} \left\lVert u_{y}.x.\Delta \right\rVert \geq \rho.
        \end{equation*}
    \end{cor}
    The first assertion follows at once from the Lemma above. The second assertion follows from the first by noticing that $\bigwedge^{k}\mathbb{Z}^{n} \leq \bigwedge^{k} \mathbb{R}^{n}$ is a discrete subset.

    Recall that for all $\epsilon>0$ the set $X_{>\epsilon} = \left\{ x \in SL_{n}(\mathbb{R})/SL_{n}(\mathbb{Z}) \mid \min_{v\in x\mathbb{Z}^{n}\setminus{0}} \left\lVert v\right\rVert > \epsilon\right\}$ is \emph{precompact}, by Mahler's compactness criterion. 

    Combining Corollary~\ref{cor:sup-rho} and Lemma~\ref{lem:c,a-good} with the celebrated non-divergence theorem of Kleinbock-Margulis~\cite[Theorems~$5.2-5.3$]{KleinbockMargulis} gives
    \begin{thm}
        There exists $D>0$ such that for any $x\in SL_{n}(\mathbb{R})/SL_{n}(\mathbb{Z})$, there exists $\rho=\rho(x)>0$ such that for all $R\geq 0$ and $\epsilon>0$
        \begin{equation*}
            Leb\left\{ y\in R\cdot S^{n-1} \mid u_{y}.x \notin X_{> \epsilon} \right\} \ll \left(\frac{\epsilon}{\rho}\right)^{D},
        \end{equation*}
        where $Leb$ is the normalized Lebesgue measure on the sphere.
    \end{thm}
    Using the identification of the $M$ action over the horosphere and the sphere, we get the equivalent estimate
    \begin{equation}\label{eq:non-div-estimate}
        m_{M}\left\{ m\in M \mid m.u_{R\cdot e_{1}}.m^{-1}.x \notin X_{>\epsilon} \right\} \ll \left(\frac{\epsilon}{\rho}\right)^{D}.
    \end{equation}

    We end up the discussion with an alternative proof of the $\rho$-bigness property for the embedded manifolds in the horosphere, which applies to slightly more general class of maps into horospheres.
    \begin{thm}
        Let $f:U\to \mathbb{R}^{n}\simeq H$ be a continuous map into a horospherical subgroup, such that for all $f_{i}$, we have $\int_{U}f_{i}(x)dx = 0$.
        Then for any $v\in \bigwedge\mathbb{R}^{n+1}$ we have
        \begin{equation*}
            \sup_{x\in U}\left\lVert u_{f(x)}.v\right\rVert \gg_{U} \left\lVert v\right\rVert. 
        \end{equation*}
    \end{thm}
    \begin{proof}
        Using the explicit description of the action by Kleinbock-Margulis as in~\eqref{eq:wedge-action} we get again that for all $x\in U$:
        \begin{equation*}
            \left\lVert u_{f(x)}.v\right\rVert = \max\left\{\left(\left\lvert \alpha_{I}\right\rvert\right)_{\text{ }0\notin I}, \left(\left\lvert \alpha_{I}+\sum_{i\notin I}(-1)^{\ell(I,i)} f_{i}(x)\cdot \alpha_{I\cup\{i\}\setminus\{0\}} \right\rvert\right)_{\text{ }0\in I} \right\}.
        \end{equation*}
        Pick an index set $I$ so that $\left\lVert v\right\rVert = \left\lvert \alpha_{I}\right\rvert$ and we consider two cases, whether $0\notin I$ or $0\in I$.
        In the first case, where $\left\lVert u_{f(x)}.v\right\rVert = \left\lvert \alpha_{I}\right\rvert$, the claim is obvious.
        In the second case, we see that
        $\left\lVert u_{f(x)}.v\right\rVert = \left\lvert \alpha_{I}+\sum_{i\notin I}(-1)^{\ell(I,i)} f_{i}(x)\cdot \alpha_{I\cup\{i\}\setminus\{0\}} \right\rvert$.
        As for all $i$, $\int_{U}f_{i}(x)dx = 0$, we get that
        \begin{equation*}
            \int_{U} \alpha_{I}+\sum_{i\notin I}(-1)^{\ell(I,i)} f_{i}(x)\cdot \alpha_{I\cup\{i\}\setminus\{0\}}dx = \alpha_{I}.
        \end{equation*}
        Therefore
        \begin{equation*}
        \begin{split}
            \sup_{U}\left\lVert u_{f(x)}.v \right\rVert &\geq \frac{1}{Leb(U)}\int_{U}\left\lVert u_{f(x)}.v\right\rVert dx \\
            &\geq \frac{1}{Leb(U)}\int_{U}\left\lvert \alpha_{I}+\sum_{i\notin I}(-1)^{\ell(I,i)} f_{i}(x)\cdot \alpha_{I\cup\{i\}\setminus\{0\}} \right\rvert dx \\
            &\geq \frac{1}{Leb(U)}\left\lvert \int_{U} \alpha_{I}+\sum_{i\notin I}(-1)^{\ell(I,i)} f_{i}(x)\cdot \alpha_{I\cup\{i\}\setminus\{0\}} dx\right\rvert \\
            &\geq \frac{1}{Leb(U)}\left\lvert \alpha_{I}\right\rvert
        \end{split}
        \end{equation*}
        \begin{equation*}
        \begin{split}
            \int_{U}\left\lvert \alpha_{I}+\sum_{i\notin I}(-1)^{\ell(I,i)} f_{i}(x)\cdot \alpha_{I\cup\{i\}\setminus\{0\}} \right\rvert dx 
            &\geq \left\lvert \int_{U} \alpha_{I}+\sum_{i\notin I}(-1)^{\ell(I,i)} f_{i}(x)\cdot \alpha_{I\cup\{i\}\setminus\{0\}} dx\right\rvert \\
            &\geq Leb(U)\left\lvert \alpha_{I}\right\rvert
        \end{split}
        \end{equation*}
        In particular, there exists a point $x\in U$ for which $\left\lVert u_{f(x)}.v\right\rVert \gg_{U} \left\lvert \alpha_{i}\right\rvert$, and the claim follows.
    \end{proof}

    \begin{ex}
        Consider the map $f(x)=(x,x-cos(x))$ for $x\in [-\pi,\pi]$.
        We have $\int_{-\pi}^{\pi}xdx=0, \int_{-\pi}^{\pi}\cos(x)dx = 0$, hence the theorem above applies for the curve $u_{f(x)}$.
        Another example would be $(\cos(x),\cos(2x))$ where say $x\in[-\pi,\pi]$.
    \end{ex}

    We note here that one can use the Theorem to verify $\rho$-bigness for a family of translates. Also in the case of dilations of maps over horospheres, the claim holds as well. One notes that all the various dilates are parameteried by a \emph{common} set $U$, and all the coordinate functions of the said dilates will still be of vanishing integral.
    
	\begin{rem}
		The case of general arithmetic lattice follows from the proof we presented by analyzing the adjoint representation and exterior powers of. Splitting the representation with respect to weights induced by $a$ (a defining rational split Cartan element of the horosphere), one can see that the elements of the horospherical subgroup act as raising operators (to the positive weight space). In particular, considering the action on any $Lie(G)_{\mathbb{Z}}$-rational vector $v$, we see the the contracting part of the vector stays the same, the neutral part of the vector has the contracting part added to it and the expanding part gets the neutral part added to it. Using the same logic, by choosing the appropriate component like before, we can arrange this component to grow, at least for one point in $R\cdot S^{n-1}$. The $(C,\alpha)$-good property holds in general.
	\end{rem}
	
	\subsection{Equidistribution of dilates of $M$-averages}
	We start our analysis of the spherical average by the following approximation argument:
	\begin{prop}\label{prop:approx}
		There exists some $\beta>0$ such that for any bounded Lipschitz function $f:X\to\mathbb{R}$ and any $R\gg 0$ we have
		\begin{equation}\label{eq:extra-avg}
			\begin{split}
				&\left\lvert \int_{m\in M}f(m.u_{R\cdot \hat{v}}.m^{-1}.x)dm_{M}-\int_{m\in M}\frac{1}{\vol(B_{R^{\beta}}^{\perp})}\int_{v'\in B_{R^{\beta}}^{\perp}}f(m.u._{v'}.u_{R\cdot \hat{v}}.m^{-1}.x)dv'dm_{M} \right\rvert \\ & \ \ll_{f} R^{2\beta-1},
			\end{split}
		\end{equation}
		where $B_{R^{\beta}}^{\perp}$ stand for the intersection of $B_{R^{\beta}}$ with the subspace $\left\{x\in \mathbb{R}^{n} \mid \left<x,\hat{v}\right>=0 \right\}$.
	\end{prop}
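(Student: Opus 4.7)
The plan is to use the $M$-equivariance of conjugation on $H$ to rewrite the thickened averaging on the right-hand side of~\eqref{eq:extra-avg} as an average of genuine spherical averages $\sigma_s(f)(x)$ with $s$ ranging over a narrow band of radii close to $R$, and then to exploit the Lipschitz regularity of $f$ to show that these spherical averages are all close to $\sigma_R(f)(x)$.

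To carry out the first step, I would begin by expanding the conjugation. Because $M\simeq \SO(d-1)$ normalizes the abelian group $H\simeq \R^{d-1}$ by its standard linear action, we have the identity $m u_{v'} u_{R\hat{v}} m^{-1} = u_{m(v'+R\hat{v})}$. Applying Fubini to perform the $m$-integration first, for each fixed $v'$ the transitivity of $M$ on spheres in $\R^{d-1}$ implies that the pushforward of Haar measure under $m\mapsto m(v'+R\hat{v})$ is the uniform probability measure on the sphere of radius $\lVert v'+R\hat{v}\rVert = \sqrt{R^{2}+\lVert v'\rVert^{2}}$ (using $v'\perp\hat{v}$). Hence the right-hand side of~\eqref{eq:extra-avg} equals
\begin{equation*}
\frac{1}{\vol(B_{R^{\beta}}^{\perp})}\int_{v'\in B_{R^{\beta}}^{\perp}}\sigma_{\sqrt{R^{2}+\lVert v'\rVert^{2}}}(f)(x)\,dv'.
\end{equation*}

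For the second step I would estimate $|\sigma_{s}(f)(x)-\sigma_{R}(f)(x)|$ for $s=\sqrt{R^{2}+\lVert v'\rVert^{2}}$. Fixing a Riemannian metric on $G$ with respect to which the orbit $t\mapsto u_{t\hat{w}}.x$ has unit speed for every unit vector $\hat{w}$, the Lipschitz hypothesis on $f$ yields $|f(u_{s\hat{w}}.x)-f(u_{R\hat{w}}.x)|\ll_{f}|s-R|$, and integrating over $\hat{w}$ gives $|\sigma_{s}(f)(x)-\sigma_{R}(f)(x)|\ll_{f}|s-R|$. The elementary inequality $\sqrt{R^{2}+\rho^{2}}-R\leq \rho^{2}/(2R)$ then converts this into $|\sigma_{s}(f)-\sigma_{R}(f)|\ll_{f}\lVert v'\rVert^{2}/R$. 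Averaging $\lVert v'\rVert^{2}$ over the $(d-2)$-dimensional ball $B_{R^{\beta}}^{\perp}$ contributes a factor of order $R^{2\beta}$, so the difference appearing on the left of~\eqref{eq:extra-avg} is $\ll_{f} R^{2\beta-1}$.

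Any choice $\beta\leq 1/3$ then satisfies $R^{2\beta-1}\leq R^{-\beta}$, delivering the required bound. The argument is structurally straightforward and presents no serious obstacle; the only subtlety is to fix once and for all a normalization of the Riemannian metric so that the orbit parameterization $t\mapsto u_{t\hat{w}}.x$ is unit-speed, ensuring that the implicit constant in $\ll_{f}$ depends only on $\lVert f\rVert_{\mathrm{Lip}}$ and $\lVert f\rVert_{\infty}$ and not on $R$.
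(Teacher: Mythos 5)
Your argument is correct and is essentially the paper's own (the paper gives no details beyond the remark that the estimate follows from the curvature of the sphere): using the $M$-equivariance $m u_{v'}u_{R\hat{v}}m^{-1}=u_{m(v'+R\hat{v})}$ and transitivity of $M\simeq SO(d-1)$ on spheres, you identify the thickened average with an average of the spherical averages $\sigma_{\sqrt{R^{2}+\lVert v'\rVert^{2}}}(f)(x)$, and the Lipschitz bound together with $\sqrt{R^{2}+\rho^{2}}-R\le\rho^{2}/(2R)$ yields the error $O_{f}\left(R^{2\beta-1}\right)\le O_{f}\left(R^{-\beta}\right)$ for $\beta\le 1/3$. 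The only cosmetic point is that an exactly unit-speed parametrization is neither needed nor available simultaneously for all directions $\hat{w}$; a fixed right-invariant metric on $G$, descending to $X$, already gives uniformly bounded speed for the curves $t\mapsto u_{t\hat{w}}.x$, which is all the Lipschitz step requires.
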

	Essentially the Proposition shows we may approximate the average along a sphere by a combination of translated lower-dimensional averages along its tangent planes, due to curvature of the sphere.
    \begin{proof}
    Consider a cap of width $2R^{\beta}$ in $R\cdot S^{n-1}$.
    The maximal difference between the tangent plane to the actual sphere over this cap is equal to $R-\sqrt{R^{2}-R^{2\beta}} = \frac{R^{2\beta}}{R+\sqrt{R^{2}-R^{2\beta}}} \leq R^{2\beta-1}$.
    Using a Lipschitz estimate for the difference of the normalized evaluation of the function over the cap of sphere and over the tangent, we see that over the cap, the difference is $O(R^{2\beta}-1)$.
    As the Lebesgue measure over the sphere is invariant under rotations, the result follows.
    \end{proof}
	
    \begin{lem}
	Assume that $x\in X$ is $H$-generic point.
	There exists a subgroup $H'\leq H$ such that $H'\simeq \mathbb{R}^{d-1}$ so that $x$ is $H'$-generic.
    \end{lem}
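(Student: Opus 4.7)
My plan is very short, because the statement is essentially a tautology once one unfolds the setup. By the conventions fixed at the start of the section, $G=\SO(d,1)(\R)$ and $H$ is the (maximal) horospherical subgroup attached to the chosen $a$; it is a connected abelian Lie group with $H\simeq\R^{d-1}$ as a Lie group. Any closed connected Lie subgroup $H'\leq H$ which is itself isomorphic to $\R^{d-1}$ has the same real dimension $d-1=\dim H$, and therefore, by connectedness of $H$, must coincide with $H$ itself.

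Accordingly, I would simply take $H':=H$. The hypothesis that $x$ is $H$-generic is by definition $\overline{H.x}=X$, which reads identically as $\overline{H'.x}=X$, and this is exactly the conclusion that $x$ is $H'$-generic. There is nothing else to verify: the group isomorphism $H'\simeq\R^{d-1}$ is the one already fixed for $H$, and containment $H'\leq H$ is trivially the identity inclusion.

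I read this lemma as purely notational bookkeeping: it introduces a distinguished full-dimensional unipotent subgroup $H'$ with dense $x$-orbit, which will then play the role of the ambient $(d-1)$-dimensional group in the arguments that combine Proposition~\ref{prop:approx} with the Dani--Margulis linearization. In particular, if subsequent steps need to analyze the tangent-plane subgroups (which are naturally $(d-2)$-dimensional, coming from $B_{R^{\beta}}^{\perp}$ in Proposition~\ref{prop:approx}), they will sit inside this $H'=H$, and the lemma as stated provides exactly that ambient container together with the hypothesis that $x$ has dense orbit along it. Because the assignment $H':=H$ witnesses the conclusion outright, I do not anticipate any obstacle.
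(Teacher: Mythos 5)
There is a genuine gap here: you have proved a vacuous reading of the statement rather than the statement the paper actually needs, and the vacuous version cannot do the job the lemma is put to later. The dimension labels in this section are inconsistent (the introduction has $H\simeq\mathbb{R}^{d}$, Section~2 has $H\simeq\mathbb{R}^{d-1}$, and the lemma again says $H'\simeq\mathbb{R}^{d-1}$), and you resolved the inconsistency by taking it literally, so that $H'=H$ is forced and the lemma collapses to a tautology. But the paper's own proof, and every subsequent use of $H'$, make clear that $H'$ is meant to be a \emph{proper}, codimension-one subgroup of $H$: namely the ``tangent-plane'' subgroup $H'_{\hat v}=\{u_v : \langle v,\hat v\rangle=0\}$ attached to a direction $\hat v$, which is exactly what appears in the averages $\frac{1}{\vol(B_{R^{\beta}}^{\perp})}\int_{v\in B_{R}^{\perp}}f(u_v.y)\,dv$ of Proposition~\ref{prop:approx}, of Theorem~\ref{thm:DM-uni}, and of the proof of Theorem~\ref{thm:spherical-qual}. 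Those steps require that $x$ (indeed, that the relevant translates of $x$) be generic \emph{for the tangent-plane subgroup itself}; knowing only that $x$ is generic for the ambient $H$ is strictly weaker and does not feed into the Dani--Margulis uniformity statement, which is formulated for $H'$-orbits. So setting $H':=H$ leaves the actual difficulty untouched.

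The missing content is the following nontrivial fact, which the paper proves via Ratner theory: for almost every direction $\hat v$, the orbit of $x$ under the hyperplane subgroup $H'_{\hat v}$ is still dense in $X$. The argument runs: by Ratner's equidistribution theorem the $H'_{\hat v}$-averages at $x$ converge to some algebraic measure $\mu_{\hat v}$ whose stabilizer lies in the countable collection $\mathcal{H}$ of closed connected subgroups $Q$ with $Q\cap\Gamma$ a lattice; for each proper $Q\in\mathcal{H}$ the set of directions $\hat v$ with $\mathrm{Stab}(\mu_{\hat v})=Q$ is contained in (the preimage of) a proper analytic subvariety --- the tube $N(Q,\cdot)\setminus S(Q,\cdot)$ --- and hence has measure zero on the sphere of directions; summing over the countably many $Q\neq G$ shows that the set of good directions has full measure, and any such $\hat v$ furnishes the desired $H'$. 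None of this is present in your proposal, and it cannot be bypassed: a one-parameter or lower-dimensional unipotent subgroup of $H$ can perfectly well have a non-dense orbit through an $H$-generic point, so genericity genuinely needs to be transferred to the smaller subgroup, and countability of the obstruction set is the mechanism that does it.
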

    Our proof is closely related to the proof of~\cite[Corollary~$1.2$]{shah-poly}.
    \begin{proof}
		Suppose that for every 
		$\hat{v}\in S^{d-1}$ we have that $$\frac{1}{\vol(B_{R^{\beta}}^{\perp})}\int_{v\in B_{R}^{\perp}}f(u_{v}.x)dv \not\to ~\int_{X}fd\mu.$$
		By Ratner's equidistribution theorem, we have that $$\frac{1}{\vol(B_{R^{\beta}}^{\perp})}\int_{v\in B_{R}^{\perp}}f(u_{v}.x)dv \to~ \int_{X}fd\mu_{\hat{v}}$$ for some algebraic probability measure $\mu_{\hat{v}}$.
		Let $\mathcal{H}$ denote the collection of all closed connected subgroups $Q\leq G$ such that $Q\cap \Gamma \leq Q$ is a lattice and $Q_{\text{uni}}$ acts ergodically on $Q/Q\cap \Gamma$ with respect to the $Q$-invariant probability measure, where $Q_{\text{uni}}\leq Q$ denotes the subgroup generated by all one-parameter unipotent subgroups of $Q$.
		Ratner's countability theorem~\cite[Theorem~$1.1$]{ratner91} asserts the $\mathcal{H}$ is countable.
		Note that $\text{Stab}\left(\mu_{\hat{v}}\right) \in\mathcal{H}$.
		Denote by $\mathcal{H}_{H}$ to be the subset of $\mathcal{H}$ which is composed of subgroups $Q\leq H$ so that $Q\in \mathcal{H}$.
		For each $Q\in\mathcal{H}_{H}$ we define the $Q$-tube (with respect to $\text{Stab}\left(\mu_{\hat{v}}\right)$) (c.f. \cite[Section~$3$]{dm93},\cite[Theorem~$2.4$]{mozes-shah}) as
		\begin{equation}\label{eq:tube-def}
			T_{Q}=\pi\left(N(Q,\text{Stab}\left(\mu_{\hat{v}}\right))\setminus S(Q,\text{Stab}\left(\mu_{\hat{v}}\right)) \right),
		\end{equation}
		where $\pi:G\to G/\Gamma$ is the natural projection map, 
		\begin{equation*}
			N(Q,\text{Stab}\left(\mu_{\hat{v}}\right))=\left\{g\in G \mid \text{Stab}\left(\mu_{\hat{v}}\right)\subset gQg^{-1} \right\},
		\end{equation*}
		and
		\begin{equation*}
			S(Q,\text{Stab}\left(\mu_{\hat{v}}\right))=\cup_{Q'\in\mathcal{H}, Q'\subsetneqq Q}N(Q',\text{Stab}\left(\mu_{\hat{v}}\right)).
		\end{equation*}
		Each such $Q$-tube is an analytic subvariety.
		Therefore, we must have for any $Q\in\mathcal{H}$ which is not equal to $G$ that $m\left\{\hat{v}\in S^{d-1} \mid \text{Stab}(\mu_{\hat{v}})=Q\right\}=~0$.
		Otherwise, we would have that for every $\hat{v}\in S^{d-1}$ we have $\text{Stab}(\mu_{\hat{v}})=~Q$ for some $Q\in\mathcal{H}$, as $\text{Stab}(\mu_{\hat{v}})$ is a connected algebraic subgroup.
		Therefore we have that 
		\begin{equation*}
			\begin{split}
				m\left\{\hat{v} \in S^{d-1} \mid Stab(\mu_{\hat{v}})=G \right\} &= 1-\sum_{Q\in \mathcal{H}\setminus\{G\}}m\left\{\hat{v}\in S^{d-1} \mid Stab(\mu_{\hat{v}})=Q\right\}\\
				&=1.
			\end{split}
		\end{equation*}
		Therefore any random choice of $\hat{v}\in S^{d-1}$ satisfies the conclusion.
	\end{proof}
	
	The following theorem of Dani-Margulis provides uniformity over the convergence speed of the set of $H'$-generic points.
	The same theorem is used in~\cite[Theorems~$4.1,4.3,4.4$]{eskin-margulis-mozes}
	\begin{thm}[\cite{dm93},\cite{eskin-margulis-mozes}]\label{thm:DM-uni}
		Assume the above mentioned settings for $G,\Gamma,H'$, and fix a compact set $D\subset X$. For a fixed bounded function $F:X\to \mathbb{R}$ and any $\varepsilon>0$ there exists finitely many points $x_{1},\ldots,x_{N}\in X$ so that the orbits $\left\{H'.x_{i}\right\}_{i=1}^{N}$ are closed, and for any compact subset $$C\subset D\setminus \bigcup_{i=1}^{N}H'.x_{i}$$ we have some $R_{0}=R_{0}(C)>0$ such that for any $R>R_{0}$ and any $y\in C$ we have
		\begin{equation*}
			\left\lvert \int_{v\in B_{R}^{\perp}}F(u_{v}.y)dv - \int_{X}Fd\mu \right\rvert < \varepsilon.
		\end{equation*}
	\end{thm}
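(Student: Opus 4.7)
The plan is to adapt the Dani--Margulis linearization scheme from \cite{dm93}, as refined in \cite{eskin-margulis-mozes}. First I would invoke Ratner's measure classification: for every $y\in X$, the normalized averages $\vol(B_R^\perp)^{-1}\int_{v\in B_R^\perp}F(u_v.y)\,dv$ converge to $\int_X F\,d\mu_y$, where $\mu_y$ is a $Q_y$-invariant probability measure on a closed orbit $Q_y.y$ for some $Q_y\in\mathcal{H}$ containing $H'$, and $\mu_y$ coincides with $\mu$ precisely when $Q_y=G$. The non-generic points thus live on the union of tubes $T_Q$ with $Q$ ranging over $\mathcal{H}_{H'}:=\{Q\in\mathcal{H}:H'\subseteq Q\subsetneq G\}$, which is a countable family by Ratner's countability theorem.

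Next I would set up the linearization for each $Q\in\mathcal{H}_{H'}$: construct a finite-dimensional representation $V_Q$ of $G$ and a vector $p_Q\in V_Q$ whose projective stabilizer coincides with $N(Q,\Gamma)$, so that the tube $T_Q$ defined in~\eqref{eq:tube-def} pulls back to the zero locus of the orbit map $y\mapsto y\cdot p_Q$. The map $v\mapsto u_v.y\cdot p_Q$ is then polynomial of bounded degree, and I would invoke the $(C,\alpha)$-goodness of polynomials \cite[Proposition~3.4]{KleinbockMargulis} to bound the measure of $\{v\in B_R^\perp:\|u_v.y\cdot p_Q\|<\eta\}$ by $O(\eta^\alpha)\vol(B_R^\perp)$, unless the polynomial is itself uniformly small on $B_R^\perp$, in which case $y$ must lie extremely close to $T_Q$. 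Combining this measure estimate with pointwise Ratner convergence off the tubes should yield, for $y$ at distance at least $\eta$ from each $T_{Q_i}$ in a finite collection, that the average approximates $\int F\,d\mu$ up to $O(\eta^\alpha\|F\|_\infty)+o_{R\to\infty}(1)$.

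To extract the finite list I would fix $\varepsilon>0$, choose $\eta=\eta(\varepsilon,F)$ so the error above is less than $\varepsilon/2$, and argue by induction on $\dim Q$ (following \cite[\S3]{dm93} and \cite[Theorem~4.1]{eskin-margulis-mozes}) that on the compact set $D$ only finitely many $Q_1,\dots,Q_N\in\mathcal{H}_{H'}$ and base points $x_1,\dots,x_N\in D$ contribute obstructions at scale $\varepsilon$. Each $T_{Q_i}\cap D$ is an analytic subvariety of $D$, and each ``deeper'' obstruction coming from a proper $Q'\subsetneq Q_i$ is either contained in a tube already listed or absorbed by the $(C,\alpha)$-good estimate at level $\eta$. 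Inside each $Q_i.x_i$ I would then select a closed $H'$-orbit $H'.x_i$ which carries the same obstruction, obtaining the required list. Uniformity on a compact $C\subset D\setminus\bigcup_i H'.x_i$ would follow from a continuity bound $\|u_v.y\cdot p_{Q_i}\|\geq c(C)>0$ for $y\in C$ and $v$ in a positive-proportion subset of $B_R^\perp$, combined with the polynomial $(C,\alpha)$-good estimate.

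The main obstacle is precisely this inductive extraction of the finite list from the countable collection $\mathcal{H}_{H'}$: one must show that at a fixed error scale $\varepsilon$ only finitely many singular tubes in the compact set $D$ genuinely obstruct equidistribution. This requires the nested structure of the tubes together with careful bookkeeping of how the $(C,\alpha)$-good polynomial estimates absorb contributions from smaller $Q$'s as $\dim Q$ decreases, and is essentially the content of the ``uniform Ratner'' theorem developed in \cite{dm93} and applied in \cite{eskin-margulis-mozes}.
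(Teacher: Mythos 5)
The paper offers no proof of this statement at all: it is imported as a black box from \cite{dm93} and \cite{eskin-margulis-mozes} (the ``uniform Ratner'' / linearization theorem), so there is no in-paper argument to compare yours with. Your sketch follows exactly the strategy of those cited sources --- Ratner classification plus countability of $\mathcal{H}$, linearization of the tubes in a finite-dimensional representation, $(C,\alpha)$-goodness of the resulting polynomial maps $v\mapsto u_v.y\cdot p_Q$, and an induction over $\dim Q$ on the compact set $D$ to extract finitely many obstructions at a fixed scale $\varepsilon$ --- so it is essentially the same approach as the proof the paper is relying on. Two small corrections to your description: in the linearization the set $N(Q,H')$ pulls back not to the zero locus of $g\mapsto g\cdot p_Q$ but to the locus where $g\cdot p_Q$ lies in the subspace of $H'$-fixed vectors of $V_Q$ (and the stabilizer of the line through $p_Q$ is essentially the normalizer of $Q$, not $N(Q,\Gamma)$); moreover the exceptional set produced by Dani--Margulis is a finite union of compact pieces of the tubes $\pi\left(N(Q_i,H')\right)$, and passing from that to the finitely many closed $H'$-orbits appearing in the statement here is an additional (setting-specific) specialization rather than an automatic step of the general argument.
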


	Let $M$ be a compact Lie group, $\rho$ a unitary representation of $M$.
	Let $\hat{M}$ denote the set of equivalence classes of finite dimensional irreducible representations of $M$.
	\begin{defn}
		We say that $f:X\to\mathbb{C}$ has $M$-type equal to $\delta\in~\hat{M}$ if the $M$-representation given by $m\to m.f \in \langle M.f \rangle \leq L^{2}(X)$ is isomorphic to $\delta$.
	\end{defn}
	We say the $f$ is $M$-fixed if its $M$-type corresponds to the trivial representation.
	Namely we get that $f(m.x)=f(x)$ for any $m\in M$, $x\in X$.
	We say that $f$ is $M$-finite if $\dim \langle M.f \rangle<\infty$.
	In general, for admissible representations, we have that the $M$ representation coming from $M.f$ breaks down to a countable sum of finite-dimensional irreducible $M$-representations, indexed by the various $M$-types $\delta$, each appearing with finite multiplicity, by the Peter-Weyl theorem.
	Assume that $f$ has $M$-type equal to $\delta$.
	There are continuous functions $a_{i}:M\to\mathbb{C}$, for $i=1,\ldots,n$ where $n=\dim \delta$, such that 
	\begin{equation}\label{eq:M-type-func}
		m.f=\sum_{i=1}^{n}a_{i}(m)\cdot f_{i}
	\end{equation}
	for some basis $\left\{f_{i}\right\}_{i=1}^{n}$ of $\langle M.f \rangle$, with $\sum_{i=1}^{n}\lvert a_{i} \rvert^{2}=\lVert f \rVert^{2}_{2}$.
	
	Assume now that $f$ has a fixed $M$-type.
	Per~\eqref{eq:M-type-func}, one may write the spherical average as follows:
	\begin{equation}\label{eq:m-type-equi}
		\begin{split}
			&\int_{m\in M}\frac{1}{\vol(B_{R^{\beta}}^{\perp})}\int_{v\in B_{R^\gamma}^\perp}f(m.u_{v}.u_{R\cdot \hat{v}}.m^{-1}.x)dvdm_{M} \\
			&\ \ = \sum_{i=1}^{n}\int_{m\in M}a_{i}(m)\cdot\frac{1}{\vol(B_{R^{\beta}}^{\perp})}\int_{v\in B_{R^{\gamma}}^\perp} f_{i}(u_{v}.u_{R\cdot \hat{v}}.m^{-1}.x)dvdm_{M}.
		\end{split}
	\end{equation}
	Hence by showing equidistribution of the inner average \\  $\frac{1}{\vol(B_{R^{\beta}}^{\perp})}\int_{v\in B_{R^{\gamma}}^\perp} f_{i}(u_{v}.u_{R\cdot \hat{v}}.m^{-1}.x)dv$ we may conclude equidistribution for function of fixed $M$-type.
	This idea goes back to Eskin-Margulis-Mozes~\cite[Equation~$(4.2)$]{eskin-margulis-mozes}.
	
	The following lemma allows us to bootstrap such equidistribution result into general functions, using the density of linear combinations of functions with finite $M$-type.
	\begin{lem}
		Denote by $C_{c}^{M}(X)$ to be the space of functions over $X$ which are continuous and of compact support having a finite $M$-type, then this space is dense in $C_{c}(X)$.
	\end{lem}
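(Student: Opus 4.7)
The plan is to deduce the density by standard Peter--Weyl theory applied to the continuous action of the compact group $M$ on $C_c(X)$ by left translation $(L_m f)(x) = f(m^{-1}.x)$. First I would fix $f \in C_c(X)$ and $\varepsilon > 0$, and set $K = \supp(f)$. Since $M$ is compact, the saturated set $M \cdot K$ is compact, so all $L_m f$ are supported in the single compact set $M \cdot K$, and the map $m \mapsto L_m f$ is continuous into $C(X)$ with the sup norm (this uses uniform continuity of $f$ on $M \cdot K$).

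Next I would produce candidate $M$-finite approximations by convolution against class functions on $M$. For $\phi \in C(M)$, define
\begin{equation*}
f_\phi(x) = \int_{M} \phi(m)\, f(m^{-1}.x)\, dm,
\end{equation*}
which is continuous and supported in $M \cdot K$. The key observation is that if $\phi$ is a matrix coefficient of an irreducible unitary representation $\pi$ of $M$, then the $M$-translates $L_{m_0} f_\phi$ all lie in the finite-dimensional $\pi$-isotypic subspace generated by $f_\phi$; in particular $f_\phi$ decomposes as a finite sum of vectors on which $M$ acts through a finite-dimensional representation, so $f_\phi$ has finite $M$-type in the sense required for the application in equation~\eqref{eq:m-type-equi}.

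Finally I would conclude using an approximate identity. Pick continuous nonnegative $\phi_n \in C(M)$ with $\int_M \phi_n\, dm = 1$ and $\supp(\phi_n)$ shrinking to $\{e\}$; uniform continuity of $f$ on $M \cdot K$ gives $\|f_{\phi_n} - f\|_\infty \to 0$. By the Peter--Weyl theorem, finite linear combinations of matrix coefficients of irreducible representations of $M$ are uniformly dense in $C(M)$, so I can choose $\psi_n$ of that form with $\|\psi_n - \phi_n\|_{C(M)} \le 1/n$. Then $f_{\psi_n}$ has finite $M$-type, is supported in $M \cdot K$, and satisfies $\|f_{\psi_n} - f_{\phi_n}\|_\infty \le n^{-1}\|f\|_\infty$, hence $f_{\psi_n} \to f$ uniformly, as desired.

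The only subtle point, and the one I would flag as the main thing to verify carefully, is the interpretation of ``$M$-type'' when $M = SO(d-1)$ is nonabelian and thus admits essentially no one-dimensional characters. The consistent reading with how the definition is \emph{used} in~\eqref{eq:m-type-equi} is that $f$ lies in a finite direct sum of $M$-isotypic components; and this is precisely what the Peter--Weyl construction above produces. No other step is more than formal manipulation of integrals.
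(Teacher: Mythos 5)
Your argument is correct, but it is not the route the paper takes. The paper works representation-theoretically: it views $f$ as a vector in a Banach space representation of $G$, passes to the dense subspace of smooth vectors, decomposes the closed $M$-span $\overline{\left<M.f\right>}$ into irreducibles by Peter--Weyl, and then invokes Harish-Chandra's theorem (absolute convergence of the $M$-isotypic Fourier expansion for differentiable vectors) so that finite partial sums of the expansion give the desired $M$-finite approximants. You instead run the classical approximate-identity argument directly in $C_c(X)$ with the sup norm: convolve $f$ over $M$ against a shrinking approximate identity $\phi_n$, use uniform continuity on the compact set $M\cdot\supp(f)$ to get $f_{\phi_n}\to f$ uniformly with supports in a fixed compact set, and then replace $\phi_n$ by a nearby finite sum of matrix coefficients (Peter--Weyl density in $C(M)$) so that each approximant is genuinely $M$-finite. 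Your version is more elementary and self-contained --- it avoids both the detour through smooth vectors of a $G$-representation and the appeal to Harish-Chandra, and it gives uniform approximation with uniformly compact supports, which is exactly the notion of density needed; the paper's route yields the stronger statement that for smooth $f$ the full $M$-type expansion converges absolutely, which is more than the lemma asserts. Your flag about the meaning of ``$M$-type'' is also well taken: since $M\simeq SO(d-1)$ is nonabelian, the literal character condition $f(m.x)=\sigma(m)f(x)$ is too restrictive, and the reading as membership in a finite sum of $M$-isotypic components (equivalently, matrix-coefficient behavior under $M$, which still permits the manipulation in~\eqref{eq:m-type-equi} after decomposing into matrix entries) is the one consistent with the lemma and with the paper's own proof.
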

	\begin{proof}
		Let $V$ be a Banach space representation for $G$.
		It is known that the space of smooth vectors, $V^{\infty}\subset V$ is dense in V.
		So we may restrict our attention to such a smooth vector $f\in V^{\infty}$.
		We associate with $f$ the following $M$-representation: $\overline{\left< M.f \right>}\leq V$, which we may assume being unitary $M$-representation by Weyl's unitary trick.
		Using the Peter-Weyl theorem, we may write this representation as follows 
		\begin{equation*}
			\overline{\left< M.f \right>}=\oplus V_{i},
		\end{equation*}
		where $V_{i}$ is an irreducible unitary $M$-representation and the sum is countable, with finite-multiplicity for each $M$-representation.
		For each such $M$-representation $\pi$ involved in $\oplus V_{i}$, we define the projection of $v\in \overline{\left< M.f \right>}$ on $\pi$ as
		\begin{equation*}
			P(\pi).v=d(\pi)^{2}\int_{m\in M}\overline{\sigma_{\pi}(m)}\cdot (\pi(m).v) dm_{M},
		\end{equation*}
		where $d(\pi)$ is the \emph{formal dimension} of $\pi$ and $\sigma_{\pi}(m)$ stands for the character of $\pi$.
		Correspondingly, we may define the $M$-Fourier expansion of $v\in \overline{\left< M.f \right>}$ as
		\begin{equation*}
			F_{M}(v)=\sum_{\pi_{i}} P(\pi_{i}).v,
		\end{equation*}
		where $\pi_{i}$ is the $M$-representation associated to $V_{i}\leq \overline{\left< M.f \right>}$.
		By a theorem of Harish-Chandra\cite[Section~$\S 4.4.2$]{warner}, the Fourier expansion converges absolutely to $v$ for any differentiable vector $v\in V^{\infty}$.
	\end{proof}
	
	Now we are in position to prove the spherical equidistribution theorem.
	We will make use of the following proposition, observed by Eskin-Margulis-Mozes.
	\begin{lem}[\cite{eskin-margulis-mozes}, Lemma~$4.2$]\label{lem:eskin-margulis-mozes}
		Let $\mathcal{K}$ be a compact connected group, equipped with a normalized Haar measure $\mu_{\mathcal{K}}$, acting continuously on a locally compact second countable space $X$.
		Let $A\subset X$ be a closed subset and $F\subset X$ a compact subset.
		Assume that
		\begin{equation*}
			\mu_{\mathcal{K}}\left\{k\in \mathcal{K} \mid k.x\in A \right\}=0,
		\end{equation*}
		for any $x\in F$.
		Then for any $\varepsilon>0$ there exists an open subset $A\subset A^{\varepsilon}$ so that 
		\begin{equation*}
			\mu_{\mathcal{K}}\left\{k\in \mathcal{K} \mid k.x \notin A^{\varepsilon} \right\} \leq \varepsilon,
		\end{equation*}
		for all $x\in F$.
	\end{lem}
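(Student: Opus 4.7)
The plan is to produce $A^\varepsilon$ as a metric neighborhood of $A$, combining continuity of $\mu$ from above with a Dini-type compactness argument on $F$. Since $X$ is locally compact and second countable it is metrizable, so fix a compatible metric $d$. For $\eta>0$ I would set
\[
A^{(\eta)} := \left\{ y \in X \mid d(y,A) \leq \eta \right\},
\]
a closed thickening of $A$, and define
\[
h_\eta(x) := \mu\left\{ k \in K \mid k.x \in A^{(\eta)} \right\}, \qquad x\in F.
\]
Because $A$ is closed one has $\bigcap_{\eta>0} A^{(\eta)} = A$, and since the preimages $\{k : k.x \in A^{(\eta)}\}$ are Borel (closed in $K$), continuity of the finite measure $\mu$ from above together with the hypothesis $\mu\{k : k.x \in A\}=0$ forces $h_\eta(x) \searrow 0$ pointwise on $F$ as $\eta\searrow 0$.

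The key step will be to verify that $h_\eta$ is upper semicontinuous on $F$. Compactness of $K\times F$ and continuity of the action furnish uniform continuity: for each $\delta>0$ there is a neighborhood $V$ of $x_0\in F$ with $d(k.x,\,k.x_0)<\delta$ for all $k\in K$ and all $x\in V\cap F$. Consequently $k.x \in A^{(\eta)}$ forces $k.x_0 \in A^{(\eta+\delta)}$, whence $h_\eta(x) \leq h_{\eta+\delta}(x_0)$ for $x$ near $x_0$. Taking $\limsup_{x\to x_0}$ and then letting $\delta\searrow 0$, the identity $\bigcap_{\delta>0} A^{(\eta+\delta)} = A^{(\eta)}$ and one more application of continuity from above yield $\limsup_{x\to x_0} h_\eta(x) \leq h_\eta(x_0)$, as required.

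Finally I would exploit compactness of $F$. Given $\varepsilon>0$, upper semicontinuity of each $h_{1/n}$ makes
\[
U_n := \left\{ x \in F \mid h_{1/n}(x) < \varepsilon \right\}
\]
open in $F$, and the pointwise decrease $h_{1/n}(x)\searrow 0$ combined with $h_{1/n}\geq h_{1/(n+1)}$ makes $(U_n)$ an increasing open cover of $F$. Compactness then supplies some $N$ with $F\subset U_N$, i.e.\ $h_{1/N}<\varepsilon$ uniformly on $F$. Taking the open set $A^\varepsilon := \{ y\in X \mid d(y,A) < 1/N \}$ one has $A \subset A^\varepsilon \subset A^{(1/N)}$, whence $\mu\{k \in K \mid k.x \in A^\varepsilon\} \leq h_{1/N}(x) < \varepsilon$ for every $x \in F$, which is the desired statement.

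The main obstacle is the upper semicontinuity step: for \emph{open} $U$ the function $x\mapsto \mu\{k : k.x\in U\}$ is automatically \emph{lower} semicontinuous, which runs in the wrong direction. This is exactly why the argument must be carried out with the closed thickenings $A^{(\eta)}$ and the inevitable loss $\delta$ absorbed through continuity of $\mu$ from above, so that the Dini-type open-cover mechanism on the compact set $F$ can then deliver the uniformity.
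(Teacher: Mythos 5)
Your argument is correct, and it is essentially the standard proof of this fact: the paper itself gives no argument (the lemma is quoted from Eskin--Margulis--Mozes, Lemma~4.2), so your closed-thickening construction $A^{(\eta)}$, the upper semicontinuity of $x\mapsto\mu\{k: k.x\in A^{(\eta)}\}$ via uniform continuity of the action on $K\times F$ plus continuity of $\mu$ from above, and the Dini-type exhaustion of the compact set $F$ by the open sets $U_n$ supply exactly the missing proof; each step (measurability of the closed sets $\{k: k.x\in A^{(\eta)}\}$, $\bigcap_{\eta>0}A^{(\eta)}=A$, $\bigcap_{\delta>0}A^{(\eta+\delta)}=A^{(\eta)}$, metrizability of a locally compact second countable Hausdorff space) checks out, and connectedness of $K$ is indeed not needed. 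One remark: you proved the conclusion $\mu\{k\in K \mid k.x\in A^{\varepsilon}\}\leq\varepsilon$, whereas the lemma as printed has $k.x\notin A^{\varepsilon}$; the printed $\notin$ is a typo, since the form you proved is both the original Eskin--Margulis--Mozes statement and the one actually used in Corollary~\ref{cor:non-concentration-tubes}.
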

	The lemma follows as any $\mathcal{K}$-representation is \emph{analytic} and the various tubes are \emph{analytic varieties}.
	We will use this lemma in order to show there is no concentration of mass near tubes, considering $\mathcal{K}=M$.
	\begin{cor}[Non-concentration of mass of $M$ orbits on tubes]\label{cor:non-concentration-tubes}
		Let $X(H',x')\subset X$ be an $H'$-tube, as defined in~\eqref{eq:tube-def}. Assume that $x\in X$ is $H'$-generic, then for any $u$-translate of $X(H',x')$, $u.X(H',x')$, and any $\varepsilon>0$, there exists an open subset $u.X(H',x')^{\varepsilon}$ containing $u.X(H',x')$ such that
		\begin{equation*}
			m_{M}\left\{m\in M \mid m.x \in u.X(H',x')^{\varepsilon} \right\}\leq \varepsilon.
		\end{equation*}
	\end{cor}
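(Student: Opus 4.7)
The plan is to deduce the corollary as a direct application of Lemma~\ref{lem:eskin-margulis-mozes} with $K = M$, $F = \{x\}$, and $A = u.X(H', x')$. The only substantive content then reduces to verifying the hypothesis of that lemma, namely that
\begin{equation*}
\mu\{m \in M : m.x \in u.X(H', x')\} = 0.
\end{equation*}

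To check this, I would first observe that $u.X(H', x')$ is a closed real-analytic subvariety of $X$, since $X(H', x')$ is analytic (as noted immediately after~\eqref{eq:tube-def}) and left translation by $u$ is an analytic diffeomorphism of $X$. Consequently the set
\begin{equation*}
S := \{m \in M : m.x \in u.X(H', x')\}
\end{equation*}
is the preimage of a closed analytic subvariety under the real-analytic orbit map $m \mapsto m.x$, hence is an analytic subset of the connected compact real-analytic manifold $M$. The standard analytic dichotomy then forces either $\mu(S) = 0$ or $S = M$.

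The heart of the argument is ruling out the alternative $S = M$. In that case $M.x \subset u.X(H', x')$, and in particular $u^{-1}.x \in X(H', x')$. By the defining property~\eqref{eq:tube-def} of a Dani--Margulis tube, every point of $X(H', x')$ has its $H'$-orbit closure contained in a proper closed subset of $X$, arising from the proper subgroup $\text{Stab}(\mu_{\hat v}) \subsetneq G$. Since $H \simeq \mathbb{R}^{d-1}$ is abelian and both $u$ and $H'$ lie in $H$, one has $H'.x = u.(H'.u^{-1}.x)$, so $\overline{H'.x}$ would be a $u$-translate of a proper closed subset of $X$, hence proper, contradicting $\overline{H'.x} = X$. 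Therefore $\mu(S) = 0$, and Lemma~\ref{lem:eskin-margulis-mozes} then produces the desired open thickening $u.X(H', x')^\varepsilon$.

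The main subtlety that I would need to handle carefully is this last step: the $H'$-orbit closure of a point in a tube is non-dense by construction, but the $H'$-orbit through a $u$-translate of such a point is a priori unrelated. The specific structure $u, H' \subseteq H$ with $H$ abelian is what makes the $u$-translated tube still a locus of non-dense $H'$-orbits; in a more general semisimple setting one would have to replace this observation with an appropriate conjugation or normalization argument.
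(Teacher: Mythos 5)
Your proposal is correct and follows essentially the same route as the paper's proof: both verify the hypothesis $\mu\{m\in M \mid m.x\in u.X(H',x')\}=0$ of Lemma~\ref{lem:eskin-margulis-mozes} by combining the analyticity of the tube with the connectedness of $M$ (your ``measure zero or all of $M$'' dichotomy is the paper's density-point argument), and both rule out the degenerate alternative by using that $u$ commutes with $H'$, so the $u$-translate of the $H'$-generic point $x$ cannot lie in the tube. Your write-up is a bit more explicit than the paper's (notably the identity $H'.x=u.(H'.u^{-1}.x)$ and the observation that tube points have non-dense $H'$-orbits), but it is the same argument.
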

	\begin{proof}
		As $x$ is $H'$-generic, we have that $x\notin X(H',x')$.
		Moreover, as $u$ commutes with $H'$, $u.x\notin X(H',x')$, or equivalently $x\notin u^{-1}.X(H',x')$.
		As $M$ is a connected compact algebraic group, we must have that
		\begin{equation*}
			m_{M}\left\{m\in M \mid m.x \in u.X(H',x') \right\} =0,
		\end{equation*}
		as if not, there exists a density point $m\in M$ for which $m.x\in u.X(H',x')$, and due to analyticity, we must have that $M.x\subset u.X(H',x')$.
		By the lemma, for every $\varepsilon>0$ there exists a thickening of $u.H(X',x')$, say $u.H(X',x')^{\varepsilon}$ so that
		\begin{equation*}
			m_{M}\left\{m\in M \mid m.x\in u.H(X',x')^{\varepsilon} \right\} \leq \varepsilon.
		\end{equation*}
	\end{proof}
	\begin{proof}[Proof of Theorem~\ref{thm:spherical-qual}]
		Fix $\varepsilon>0$.
		In the case of non-compact space, fix $\rho$ as in Corollary~\ref{cor:sup-rho}, then take $\epsilon$ such that the right hand side of~\eqref{eq:non-div-estimate} would be smaller than $\epsilon$. Denote the compact set $\overline{X_{>\epsilon}}$ by $\Omega$. 
        In case of a compact space, simply take $\Omega=X$.
		By Dani-Margulis~\cite[Theorem~$3$]{dm93}, there are finitely-many $H'$-tubes $X(H',x_i)$ such that if $x\in \Omega\setminus \cup_{i=1}^{N}X(H',x_i)$, then $x$ is $H'$-generic.
		For any fixed $R>0$, using Corollary~\ref{cor:non-concentration-tubes} to Lemma~\ref{lem:eskin-margulis-mozes}, applied to each of the translates of the tubes $u_{-R\cdot\hat{v}}.X(H',x_{i})$ and picking $\varepsilon/N$, where $N$ is the number of tubes involved in the subset $K$, we have that if we consider 
		$\Omega'=~\Omega\setminus~\cup_{i=1}^{N}X(H',x_i)^{\varepsilon/N}$, we have that for any $R\gg 0$, 
		\begin{equation}\label{eq:good-set-measure-est}
			m_{M}\left\{ m\in M \mid u_{R\cdot\hat{v}}.m.x\in \Omega' \right\} \geq 1-2\varepsilon.
		\end{equation}
		Using the Dani-Margulis uniformity result, for $R>R_0$ and for any $y\in K'$ we have that 
		\begin{equation}\label{eq:uni-dani-margulis}
			\left\lvert\frac{1}{\vol(B_{R^{\beta}}^{\perp})}\int_{v\in B_{R}^{\perp}}f(u_{v}.y)dv - \int_{X}fd\mu \right\rvert < \varepsilon.    
		\end{equation}
		Therefore one may conclude using~\eqref{eq:good-set-measure-est},~\eqref{eq:uni-dani-margulis}:
		\begin{equation}\label{eq:M-fixed-equi}
			\begin{split}
				&\left\lvert\int_{m\in M}\frac{1}{\vol(B_{R^{\beta}}^{\perp})}\int_{v\in B_{R}^{\perp}}f(u_{v}.u_{R\cdot \hat{v}}.m.x)dvdm_{M} - \int_{X}fd\mu\right\rvert \\
				&\leq \left\lvert\int_{m\in M: u_{R\cdot \hat{v}}.m.x\in K'}\frac{1}{\vol(B_{R^{\beta}}^{\perp})}\int_{v\in B_{R}^{\perp}}f(u_{v}.u_{R\cdot \hat{v}}.m.x)dvdm_{M} - \int_{X}fd\mu\right\rvert + \lVert f \rVert_{\infty}\cdot 2\varepsilon \\
				&\leq \int_{m\in M: u_{R\cdot \hat{v}}.m.x\in K'}\left\lvert \frac{1}{\vol(B_{R^{\beta}}^{\perp})}\int_{v\in B_{R}^{\perp}}f(u_{v}.u_{R\cdot \hat{v}}.m.x)dv - \int_{X}fd\mu\right\rvert dm_{M} + \lVert f \rVert_{\infty}\cdot 2\varepsilon \\
				&\leq \varepsilon+\lVert f \rVert_{\infty}\cdot 2\varepsilon.
			\end{split}
		\end{equation}
		In order to move from~\eqref{eq:M-fixed-equi} to the general equidistribution result, we will restrict our results to functions of fixed $M$-type.
		Using~\eqref{eq:M-type-func} we get that
		\begin{equation*}
			\begin{split}
				&\int_{m\in M}\frac{1}{\vol(B_{R^{\beta}}^{\perp})}\int_{v\in B_{R}^{\perp}}f(m.u_{v}.u_{R\cdot \hat{v}}.m^{-1}.x)dvdm_{M} \\
				&\ \ = \int_{m\in M}\frac{1}{\vol(B_{R^{\beta}}^{\perp})}\int_{v\in B_{R}^{\perp}}\sum_{i=1}^{n}a_{i}(m)f_{i}(u_{v}.u_{R\cdot \hat{v}}.m^{-1}.x)dvdm_{M} \\
				&\ \ = \sum_{i=1}^{n}a_{i}(m)\int_{m\in M}\frac{1}{\vol(B_{R^{\beta}}^{\perp})}\int_{v\in B_{R}^{\perp}}f_{i}(u_{v}.u_{R\cdot \hat{v}}.m^{-1}.x)dvdm_{M}.
			\end{split}
		\end{equation*}
		As the inner integral tends to $0$, we have that for $R\gg R_0$,
		\begin{equation*}
			\left\lvert \int_{m\in M}\frac{1}{\vol(B_{R^{\beta}}^{\perp})}\int_{v\in B_{R}^{\perp}}f(m.u_{v}.u_{R\cdot \hat{v}}.m^{-1}.x)dvdm_{M} \right\rvert \ll_{\sigma,f} \varepsilon.  
		\end{equation*}
	\end{proof}
	
	\begin{rem}
		The above proof works verbatim if instead of integrating over the whole sphere as in~\eqref{eq:M-avg}, one integrates with respect to some absolutely-continuous density defined over $M$, as the non-divergence estimate will work (by choosing maybe a smaller $\varepsilon$) and the the thickening result of Eskin-Margulis-Mozes in Lemma~\ref{lem:eskin-margulis-mozes} will still apply assuming only absolute-continuity of the density due to analyticity of the tubes which are involved in $K'$.
	\end{rem}

	\section{Applications of joinings of Kronecker system and unipotent flow over semi-simple homogeneous spaces}
	
	The proofs in this section follows an idea of A. Venkatesh regarding sparse equidistribuiton appearing in~\cite[Theorem~$3.1$]{venkatesh10} which was developed in greater generality by the author in~\cite{katz}.
	Unipotent flows (on homogeneous spaces of semi-simple Lie groups) are mixing (due to the Howe-Moore theorem) and therefore are disjoint from any Kronecker system, by Furstenberg's theorem.
	In some cases, where quantitative equidistribution result for the unipotent flow is available, one may use this result (together with a quantitative mixing statement) in order to quantify this disjointness statement (see Theorem~\ref{thm:venkatesh}).
	The idea of Venkatesh is to interpret this result as a computation of a ''Fourier coefficient`` of the sampled function, and then one may study the sparser average by analyzing the corresponding spectral expansion of the averaging operator, utilizing the quantitative estimate regarding the Fourier transform of the sampled function.
	As the Fourier coefficients are decaying (in a quantitative fashion), one expects that such a summation process would work for averages which are sparser than regular ball averages encountered in the ergodic theorem.
	In~\cite[Theorem~$3.1$, Equation~(3.7)]{venkatesh10}, Venkatesh used quantitative information regarding sampling along arithmetic progressions (which can be thought of as sparse samples from the regular ergodic average) in order to deduce a sparse equidistribution result about the horocyclic flow.
	
	We are demonstrating this philosophy by introducing other types of sparse averages, in the multidimensional setting and analyzing them by similar techniques.
	
	we remark here that most of our averages are ''ball-like``, in the sense that one may smooth the average in the flow direction with a controllable error.
	This smoothing effect essentially effects the summation average as transforming its kernel for being ''almost $L^{1}$`` (in practice, it truncates the Fourier expansion in our computations).
	Similar phenomenon happens with the ball averages in~\cite[Proof of Lemma~$3.1$]{venkatesh10}, when utilizing the Van-der-Corput trick, one smooths the ball average, and notice that the convolution of two measures supported on balls (in the flow direction) become absolutely-continuous with respect to the Lebesgue measure (over the orbit).
	
	We will use the following auxiliary construction.
	Fix $R>0$ large. Given a smooth function $f:X\to\mathbb{C}$, $x_0\in X$, we define the \emph{model of $f$} of radius $R$,  $F_{R}:\mathbb{R}^{d}\to\mathbb{R}$ to be
	\begin{equation*}
		F_{R}(v)=\begin{cases}
			f(u_{R\cdot v}.x_0) &\lVert v \rVert \leq 1 \\
			0 &\text{otherwise}
		\end{cases},
	\end{equation*}
	
	As this function is a continuous function supported in $L^{2}[-1,1]^{d}$, we may expand $F_{R}$ in a Fourier series
	\begin{equation}\label{eq:Fourier-expansion}
		F_{R}(v) = \sum_{\overline{n}\in \mathbb{Z}^{d}} a_{\overline{n},R}e_{\overline{n}}(v/2),
	\end{equation}
	where we use the following notation
	\begin{equation*}
		e_{\overline{n}}(v)=e^{2\pi i \left<\overline{n},v\right>},
	\end{equation*}
	and the Fourier coefficients $\left\{a_{\overline{n},R} \right\}_{\overline{n}\in\mathbb{Z}^d}$ are defined as
	\begin{equation*}
		\begin{split}
			a_{\overline{n},R} &= \frac{1}{2^{d}}\int_{v\in [-1,1]^{d}} F_{R}(v)\cdot e^{-2\cdot \pi i \left<v,\overline{n}/2\right>}dv,
		\end{split}
	\end{equation*}
	with equality in the $L^2$-sense.
	
	Using the definition of $F_{R}(v)$, we get
	\begin{equation}\label{eq:Fourier-coefficient}
		\begin{split}
			a_{\overline{n},R} &= \frac{1}{2^{d}}\int_{v\in [-1,1]^{d}} F_{R}(v)\cdot e^{-2\cdot \pi i \left<v,\overline{n}/2\right>}dv \\
			&= \frac{1}{(2\cdot R)^{d}}\int_{\lVert v \rVert \leq R}f(u_{v}.x_0)e^{-2\cdot \pi i \left<v/R,\overline{n}/2\right>}dv. 
		\end{split}
	\end{equation}
	
	As $f$ was assumed to be smooth, $F_{R}$ is smooth, hence the Fourier expansion~\eqref{eq:Fourier-expansion} converges pointwise on the interior points of the cube.
	
	We will use the following definition.
	\begin{defn}\label{def:quantitative-equi-rate}
		We say that a point $x_{0}\in G/\Gamma$ has an effective equidistribution theorem of rate $\gamma>0$ with respect to $H$ and a some Sobolev norm of order $K$ if there exists $\gamma>0$, $R_{0}(x_{0})>0$ such that for any function $f:X\to \mathbb{C}$ of compact support and vanishing integral and for any $R>R_0$ 
		\begin{equation*}
			\left\lvert \frac{1}{\vol_{H}(B_{R}^{H})} \int_{h\in B_{R}^{H}} f(h.x_0)dh \right\rvert \ll_{f} R^{-\gamma}.
		\end{equation*}
	\end{defn}
	
	The following quantitative disjointness theorem was essentially achieved by A. Venkatesh, with various adaptions and generalizations
	\begin{thm}[\cite{venkatesh10} Lemma~$3.1$, \cite{ubis2016effective} Lemma~$2.6$, \cite{sarnakubis} Proposition~$3.1$, \cite{katz} Theorem~$1.6$]\label{thm:venkatesh}
		In the previous settings, assume that $f:X\to\mathbb{R}$ is a smooth function with finite Sobolev norm of order $K$ and vanishing integral and that $x_0\in X$ is an $H$-generic point with effective equidistribution theorem of rate $\gamma>0$, then there exists $\gamma'>0$ such that
		\begin{equation*}
			\left\lvert  \frac{1}{R^{d}}\int_{\lVert v \rVert\leq R}f(u_{v}.x_0)e^{2\pi i \left<v,z\right>}dv \right\rvert \ll_{f} R^{-\gamma'},
		\end{equation*}
		for any $z\in \hat{\mathbb{R}^{d}}$.
	\end{thm}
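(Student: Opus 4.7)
The plan follows the now-standard smoothing / Cauchy--Schwarz argument of Venkatesh~\cite{venkatesh10}, adapted to our multidimensional abelian unipotent setting. The oscillatory integral
\[
I(z,R) := \frac{1}{R^d}\int_{\|v\|\leq R} f(u_v.x_0)\, e^{2\pi i \langle v,z\rangle}\, dv
\]
is a matrix coefficient between the mixing unipotent orbit $\{u_v.x_0\}$ and the isometric Kronecker character $v\mapsto e^{2\pi i\langle v,z\rangle}$; the task is to convert qualitative disjointness of these two systems into a polynomial rate that is moreover \emph{uniform in $z$}.

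First I would introduce an auxiliary scale $H = R^{\theta}$ with $\theta\in(0,1)$ to be optimized. A Van der Corput--type smoothing in the unipotent direction (convolving $\mathbf{1}_{B_R}$ against $\mathbf{1}_{B_H}$ at cost $O(\|f\|_\infty\, H/R)$) followed by Cauchy--Schwarz in the outer variable reduces $|I(z,R)|^2$, after expanding the square and changing variables, to
\[
\frac{1}{H^{2d}}\iint_{\|h_1\|,\|h_2\|\leq H} e^{2\pi i\langle h_1-h_2,\, z\rangle}\, C(h_1-h_2;R)\, dh_1\, dh_2 + O(H/R),
\]
where $C(h;R) := \frac{1}{R^d}\int_{\|v\|\leq R} F_h(u_v.x_0)\, dv$ is the $R$-ergodic average of the shifted correlation $F_h(x) := f(u_h.x)\overline{f(x)}$. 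The phase has now been decoupled from the dynamics.

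The correlation is then estimated in two stages. Quantitative equidistribution applied to $F_h$ gives $C(h;R) = \int_X F_h\, d\mu + O_f\!\left(R^{-\gamma}(1+\|h\|)^L\right)$, where the factor $(1+\|h\|)^L$ tracks the polynomial blow-up of $\Sob_K(F_h)$ under the unipotent conjugation. The main term $\int_X F_h\, d\mu = \langle \pi(u_h)f,\, f\rangle_{L^2(X)}$ is a matrix coefficient of the regular representation; since $f$ is orthogonal to the constants and $G$ is semisimple, the quantitative form of Howe--Moore for smooth vectors yields $|\langle \pi(u_h)f, f\rangle| \ll_f (1+\|h\|)^{-\eta}$ for some $\eta>0$ depending on the spectral gap of $G$. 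Substituting, bounding the phase trivially and integrating gives
\[
|I(z,R)|^2 \ll_f H^{-\eta} + R^{-\gamma} H^L + H/R,
\]
and optimizing $H = R^{\theta}$ produces $|I(z,R)| \ll_f R^{-\gamma'}$ for some $\gamma' > 0$ depending only on $\gamma$, $\eta$, and $L$.

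The main obstacle, and the crucial feature, is the uniformity in $z$: this is forced on us because it feeds directly into summing the Fourier expansion~\eqref{eq:Fourier-expansion} in the later applications. It is preserved in the argument above because after Cauchy--Schwarz the variable $z$ appears only in the phase $e^{2\pi i\langle h_1-h_2, z\rangle}$, which is discarded via the triangle inequality, while all the decay comes from dynamics internal to $X$. The secondary bookkeeping task --- tracking how $\Sob_K(F_h)$ grows in $\|h\|$ against the Howe--Moore rate $\eta$ --- amounts to balancing two polynomial quantities and is the point at which one actually pins down the numerical value of $\gamma'$.
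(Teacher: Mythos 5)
Your proposal is correct, and it is essentially the argument of Venkatesh's Lemma 3.1 (and its generalizations in the other cited works), which is exactly what the paper invokes by citation rather than reproving: smoothing at an intermediate scale, Cauchy--Schwarz to decouple the character and reduce to shifted correlations, quantitative equidistribution applied to $f(u_h\,\cdot)\overline{f(\cdot)}$ with polynomial Sobolev-norm growth in $h$, quantitative mixing from the spectral gap for the main (matrix-coefficient) term, and optimization of the auxiliary scale, with uniformity in $z$ coming precisely because the phase is discarded after Cauchy--Schwarz. This matches the strategy the paper itself describes (quantitative equidistribution plus quantitative mixing via the Van der Corput smoothing), so no further comparison is needed.
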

	
	Using the quantitative disjointness theorem, we may effectively bound the Fourier coefficient of the model of $f$ as follows
	\begin{prop}[Disjointness bound]
		Assume that the condition of Theorem~\ref{thm:venkatesh} holds then for any $R>1$ we have the following bound for the Fourier coefficients:
		\begin{equation*}
			\lvert a_{\overline{n},R} \rvert \ll_{f} R^{-\gamma'}.
		\end{equation*}
	\end{prop}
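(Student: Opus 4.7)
The plan is to recognize that the Fourier coefficient $a_{\overline{n},R}$ is, up to a harmless constant, precisely the oscillatory integral whose decay is controlled by Theorem~\ref{thm:venkatesh}, with the frequency $z$ chosen as a particular linear function of $\overline{n}$ and $R$.

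Concretely, I would start from the explicit formula for $a_{\overline{n},R}$ recorded in~\eqref{eq:Fourier-coefficient} and rewrite the phase $\left<v/R,\overline{n}/2\right> = \left<v,\overline{n}/(2R)\right>$, so that the integral takes the shape $\frac{1}{R^d}\int_{\lVert v\rVert \leq R} f(u_v.x_0) e^{-2\pi i\left<v,z\right>}\,dv$ with $z = \overline{n}/(2R) \in \hat{\mathbb{R}^{d}}$, multiplied by the overall constant $2^{-d}$. Then I would invoke Theorem~\ref{thm:venkatesh} directly with this choice of $z$ (the sign of $z$ is irrelevant once one passes to the modulus), absorbing $2^{-d}$ into the implicit constant, to obtain $\lvert a_{\overline{n},R}\rvert \ll_f R^{-\gamma'}$.

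The crucial observation --- and the reason there is essentially nothing more to prove --- is that the estimate furnished by Theorem~\ref{thm:venkatesh} is \emph{uniform} in the frequency $z$. This uniformity, which is the quantitative incarnation of the disjointness of the unipotent flow from Kronecker systems, is what allows a single exponent $\gamma'$ to do the job for every lattice point $\overline{n}\in\mathbb{Z}^{d}$ at the same time; in particular, the fact that $z=\overline{n}/(2R)$ may be very close to (or far from) the origin in $\hat{\mathbb{R}^{d}}$ plays no role. There is therefore no genuine obstacle: the proposition is best viewed as a bookkeeping repackaging of Venkatesh's theorem into a form convenient for the subsequent spectral analysis of the sparser averaging operator.
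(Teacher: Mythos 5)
Your proposal is correct and is exactly the argument the paper intends: the paper's one-line proof ("follows immediately from inspection of~\eqref{eq:Fourier-coefficient}") is precisely your rewriting of the phase as $\left<v,\overline{n}/(2R)\right>$ so that $a_{\overline{n},R}$ becomes, up to the constant $2^{-d}$ and an irrelevant sign in the exponent, the integral bounded uniformly in $z$ by Theorem~\ref{thm:venkatesh}. Nothing further is needed.
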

	The assertion follows immediately from inspection of the explicit expressions of the Fourier coefficients $\left\{a_{\overline{n},R}\right\}_{\overline{n}\in\mathbb{Z}^{d}}$ obtained in ~\eqref{eq:Fourier-coefficient}.

	Fix $\phi$ to be a standard positive mollifier on $\mathbb{R}^d$ and denote $\phi_{\delta}(v)=~\frac{1}{\delta^{d}}\phi(v/\delta)$.
	As our averages are not integrable, we will modify them by convolving the averaging operator with the mollifier to achieve a smoother average.
	
	\begin{prop}[Estimation of smoothing error] In the previous notation, the following estimate holds:
		$$\lvert A_{R,\omega}f(x_0)-(A_{R,\omega}\star \phi_{\delta})f(x_0) \rvert \ll_{f} \frac{\delta}{R^{\omega}}.$$
	\end{prop}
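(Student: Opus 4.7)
The plan is to identify the averaging operator $A_{R,\omega}$ with a probability measure $\mu_{R,\omega}$ on $\mathbb{R}^{d}$, so that both sides become integrals of $f$ against measures pushed through the map $v\mapsto u_v.x_0$. Switching to polar coordinates, the measure $\mu_{R,\omega}$ has Lebesgue density
\[
g_{R,\omega}(v)=\frac{1}{R^{\omega}\,|S^{d-1}|\,|v|^{d-1}}\,\mathbf{1}_{\{R/2\le |v|\le R/2+R^{\omega}\}}(v).
\]
Since convolving the averaging operator with $\phi_{\delta}$ amounts to replacing this density by $g_{R,\omega}\star\phi_{\delta}$, a trivial pairing bound gives
\[
|A_{R,\omega}f(x_0)-(A_{R,\omega}\star\phi_{\delta})f(x_0)|\le\lVert f\rVert_{\infty}\cdot\lVert g_{R,\omega}-g_{R,\omega}\star\phi_{\delta}\rVert_{L^{1}(\mathbb{R}^d)},
\]
reducing the proposition to a purely Euclidean $L^{1}$-estimate on the kernel.

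For the $L^{1}$-estimate I would invoke the classical modulus-of-continuity bound $\lVert g-g\star\phi_{\delta}\rVert_{L^{1}}\le\sup_{|w|\le\delta}\lVert g(\cdot)-g(\cdot-w)\rVert_{L^{1}}$ and compute $\lVert g_{R,\omega}-g_{R,\omega}(\cdot-w)\rVert_{L^{1}}$ geometrically. For $|w|\le\delta\ll R$ this difference splits into a support contribution, coming from the symmetric difference of the annulus and its $w$-translate, and a density-variation contribution on the common support arising from the factor $|v|^{-(d-1)}$. The symmetric difference has Lebesgue volume $\ll R^{d-1}|w|$ (two boundary spheres of radius $\sim R$, each shifted by at most $|w|$), and the density is of order $(R^{\omega}R^{d-1})^{-1}$, so this piece contributes $\ll |w|/R^{\omega}\le\delta/R^{\omega}$. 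The density-variation part is readily seen to contribute only $\ll |w|/R$, which is subsumed since $\omega<1$.

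The argument is purely geometric once $A_{R,\omega}$ is written in polar coordinates and requires no equidistribution input; the key cancellation is between the boundary area $R^{d-1}$ of the annulus and the normalization factor $R^{\omega}R^{d-1}$, leaving the advertised relative error $\delta/R^{\omega}$. The only delicate point is making sure the implicit regime $\delta\ll R$ is respected, which is automatic in the intended application since $\delta$ is eventually chosen as a small power of $R$.
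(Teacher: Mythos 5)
Your argument is correct and is essentially the paper's: both bound the difference by $\lVert f\rVert_{\infty}$ times an $L^{1}$-estimate on the difference between the annulus kernel and its mollification, concentrated in a $\delta$-neighbourhood of the annulus boundary, with the same numerology of boundary measure $O(R^{d-1})$ against the normalization $O(R^{\omega}\cdot R^{d-1})$ yielding $\delta/R^{\omega}$. Your version is marginally more careful, since the translation modulus-of-continuity step also accounts for the variation of the $|v|^{-(d-1)}$ density on the common support (a term of size $O(\delta/R)$ that the paper's two-line proof, which only counts the thickened boundary, implicitly absorbs).
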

	\begin{proof}
		Using the explicit formula for $A_{R,\omega}$, we infer that this difference is bounded by $\lVert f \rVert_{\infty}\cdot m(\partial A_{R,\omega})^{\delta}$, where $m(\partial A_{R,\omega})^{\delta}$ stands for the measure of a $\delta$-thickening of the boundary of the annulus.
		The natural measure associated with each boundary component of the annulus is $O(R^{d-1})$ and the normalization is done by $O(R^{\omega}\cdot R^{d-1})$, and so the required estimate holds.
	\end{proof}

	So from now on, we will assume that the average is smoothed by $\phi_{\delta}$ at the expanse of the error from the previous proposition.

	\begin{prop}[Truncation effect]\label{prop:truncation}
		In the same settings as above, there exists some $L=L(d)>0$ such that 
		\begin{equation*}
			\left\lvert (A_{R,\omega}\star\phi_{\delta})\left(\sum_{\lVert \overline{n}\rVert \geq \delta^{-L}} a_{\overline{n},R}e_{\overline{n}}(v)\right) \right\rvert \ll_{f} \delta.
		\end{equation*}
	\end{prop}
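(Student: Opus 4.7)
The plan is to reduce the bound to a tail estimate on Fourier coefficients, using two standard ingredients: Parseval's identity to control the coefficients $a_{\overline{n},R}$ in $\ell^{2}$, and the Schwartz-type decay of $\widehat{\phi}$ at high frequencies to control the symbol of the smoothed operator.

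First, I would unpack the action of $A_{R,\omega}\star\phi_{\delta}$ on a single Fourier mode $e_{\overline{n}}(v/2)$ appearing in~\eqref{eq:Fourier-expansion}. Writing $A_{R,\omega}$ as integration against a kernel $\Psi_{R,\omega}$ on $\mathbb{R}^{d}$ (the normalized indicator of the annulus, of $L^{1}$-norm bounded by an absolute constant), the composite operator is integration against $\Psi_{R,\omega}\star\phi_{\delta}$. Applying this kernel to a character produces a symbol of the form $\widehat{\Psi_{R,\omega}}(\xi_{\overline{n}})\cdot\widehat{\phi}(\delta\,\xi_{\overline{n}})$, where $\xi_{\overline{n}}$ is the physical frequency attached to the model Fourier mode $e_{\overline{n}}(v/2)$ under the rescaling $v_{\mathrm{model}}=v_{\mathrm{phys}}/R$. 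The first factor is uniformly bounded by $\lVert\Psi_{R,\omega}\rVert_{L^{1}}=O(1)$, and, since $\phi$ is a standard mollifier (hence Schwartz), the second factor satisfies $\lvert\widehat{\phi}(\eta)\rvert\ll_{N}(1+\lvert\eta\rvert)^{-N}$ for every $N>0$.

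Second, Parseval's identity for $F_{R}$ on $[-1,1]^{d}$ yields
\begin{equation*}
    \sum_{\overline{n}\in\mathbb{Z}^{d}}\lvert a_{\overline{n},R}\rvert^{2}=\frac{1}{2^{d}}\lVert F_{R}\rVert_{L^{2}([-1,1]^{d})}^{2}\leq\lVert f\rVert_{\infty}^{2}.
\end{equation*}
Combining this with the symbol bound and Cauchy-Schwarz reduces the task to estimating
\begin{equation*}
    \lVert f\rVert_{\infty}\cdot\Bigl(\sum_{\lVert\overline{n}\rVert\geq\delta^{-L}}\bigl(\delta\lVert\xi_{\overline{n}}\rVert\bigr)^{-2N}\Bigr)^{1/2},
\end{equation*}
which, by comparison with a radial integral and the substitution $u=\delta\xi_{\overline{n}}$, gives polynomial decay in $\delta$ whose exponent can be made arbitrarily large by taking $N$ large.

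The only point requiring care is the coordinate rescaling relating the index $\overline{n}$ to the physical frequency $\xi_{\overline{n}}$; this scaling determines the admissible value of the truncation exponent $L$ and thus its dependence on $d$. Once this book-keeping is settled, choosing $L=L(d)$ above the threshold forced by the scaling and then $N=N(L,d)$ sufficiently large produces the desired bound $\ll_{f}\delta$. No deeper analytic obstacle arises; the argument is a straightforward Parseval-plus-Schwartz tail estimate, well-suited to the fact that the mollifier acts as a low-pass filter with effective cutoff at frequency of order $\delta^{-1}$.
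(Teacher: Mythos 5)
Your proposal is correct and follows essentially the same route as the paper: both factor the symbol via the convolution theorem as $\hat{A_{R,\omega}}(\overline{n})\cdot\hat{\phi_{\delta}}(\overline{n})$, bound the annulus-average factor by $O(1)$ (it is a probability measure), invoke the rapid (non-stationary phase / Schwartz) decay of the mollifier transform beyond the cutoff $\lVert\overline{n}\rVert\geq\delta^{-L}$, and sum the tail to get a positive power of $\delta$, with $L=L(d)$ fixed by the book-keeping. The only difference is cosmetic: you control the coefficients by Parseval and Cauchy--Schwarz, whereas the paper simply uses the trivial uniform bound $\lvert a_{\overline{n},R}\rvert\ll_{f}1$ termwise with $K=d+1$ and $L=K+1$.
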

	\begin{proof}
		By computation we have that
		\begin{equation*}
			\begin{split}
				(A_{R,\omega}\star\phi_{\delta})e_{\overline{n}}(v) &= \widehat{(A_{R,\omega}\star\beta_{\delta})}(\overline{n})\\
				&= \hat{A_{R,\omega}}(\overline{n}) \cdot \hat{\phi_{\delta}}(\overline{n}),
			\end{split}
		\end{equation*}
		by the convolution property of Fourier transform.
		By the non-stationary phase method~\cite[\S8.1 Proposition~$4$]{stein2}, we have that for any $K>0$
		\begin{equation*}
			\lvert \hat{\phi_{\delta}}(\overline{n}) \rvert \ll_{K} (\delta\cdot \lVert \overline{n} \rVert)^{-K}.
		\end{equation*}
		Choosing $K=d+1$ in turn gives the following estimate
		\begin{equation*}
			\begin{split}
				\left\lvert (A_{R,\omega}\star\phi_{\delta})\left(\sum_{\lVert \overline{n}\rVert \geq \delta^{-L}} a_{\overline{n},R}e_{\overline{n}}(v)\right) \right\rvert &\leq \sum_{\lVert \overline{n}\rVert \geq \delta^{-L}} \lvert a_{\overline{n},R}\rvert \left\lvert \hat{A_{R,\omega}}(\overline{n}) \cdot \hat{\phi_{\delta}}(\overline{n}) \right\rvert \\
				&\ll_{f} \sum_{\lVert \overline{n}\rVert \geq \delta^{-L}}(\delta\cdot \lVert \overline{n} \rVert)^{-K} \\
				&\ll_{f} \delta^{L-K},
			\end{split}
		\end{equation*}
		where picking $L=K+1$, proves the claimed estimate.
	\end{proof}

	We recall the following Fourier transforms~\cite[\S3 Theorem~$3.3$]{stein-weiss} 
	\begin{equation}
		\begin{split}
			\hat{\beta_{1}}(z) &= C_{\beta} \frac{J_{d/2}(\lvert z\rvert )}{\lvert z\rvert ^{d/2}} \\
			\hat{\sigma_{1}}(z) &= C_{\sigma} \frac{J_{d/2-1}(\lvert z\rvert )}{\lvert z\rvert ^{d/2-1}},
		\end{split}
	\end{equation}
	where $J_{\star}$ stands for the modified Bessel function, and we have the following renormalization relations:
	\begin{equation*}
		\hat{\beta_{R}}(z) =\hat{\beta_{1}}(R\cdot z) ,\  \hat{\sigma_{R}}(z) = \hat{\sigma_{1}}(R\cdot z).
	\end{equation*}
	Moreover, using the stationary phase method~\cite[\S8.2 Proposition~$5$]{stein2} to evaluate the asymptotics of the Bessel functions we have the following large-mode estimates for the Fourier transforms, for $\lVert z \rVert \geq 1$ \cite[\S8.3 Theorem~$1$]{stein2}:
	\begin{equation*}
		\lvert \hat{\beta_{1}}(z) \rvert \ll \lVert z \rVert^{-(d+1)/2} ,\  \lvert \hat{\sigma_{1}}(z) \rvert \ll \lVert z \rVert^{-(d-1)/2}.
	\end{equation*}
	Using those expansions, we may estimate the Fourier transform of the annulus average as follows:
	\begin{equation*}
		\begin{split}
			\hat{A_{R,\omega}}(z) &= \widehat{\frac{1}{R^{\omega}}\int_{t=0}^{R^{\omega}}\sigma_{R/2+t}dt}(z) \\
			&= \frac{1}{R^{\omega}}\int_{t=0}^{R^{\omega}}\hat{\sigma_1}((R/2+t)(z))dt \\
			&= \frac{C_\sigma}{R^{\omega}}\int_{t=0}^{R^{\omega}} \frac{J_{d/2-1}(\lvert (R+t/2)z\rvert )}{\lvert (R+t/2)z\rvert ^{d/2-1}}dt, \\
		\end{split}
	\end{equation*}
	and in particular
	\begin{equation*}
		\lvert \hat{A_{R,\omega}}(z) \rvert \ll \lVert R\cdot z \rVert^{-\frac{d-1}{2}},
	\end{equation*}
	for $\lVert R\cdot z \rVert \geq 1$.
	
	We are now in position to prove the equidistribution result for annulus.
	
	\begin{proof}[Proof of Theorem~\ref{thm:annuli-equi}]
		We calculate the averaging of $f$ over the annulus via the Fourier series expansion as follows
		\begin{equation}\label{eq:annu-estimate}
			\begin{split}
				\lvert A_{R,\omega}f(x_0) \rvert &= \lvert A_{R,\omega}\star\beta_{\delta}\sum_{\lVert\overline{n}\rVert \geq 0} a_{\overline{n},R}e_{\overline{n}}(v) \rvert +O_{f}(\delta/R^{\omega})\\
				&= \lvert A_{R,\omega}\star\beta_{\delta}\sum_{\lVert\overline{n}\rVert \leq \delta^{-L}} a_{\overline{n},R}e_{\overline{n}}(v) \rvert +O_{f}(\delta)+O_{f}(\delta/R^{\omega}) \\
				&\leq \lvert \sum_{\lVert\overline{n}\rVert \leq \delta^{-L}} a_{\overline{n},R}\hat{A_{R,\omega}}(\overline{n}) \rvert +O_{f}(\delta/R^{\omega}) \\
				&\ll_{f} \sum_{\lVert\overline{n}\rVert \leq \delta^{-L}}\lvert a_{\overline{n},R}\rvert \frac{1}{\lVert \overline{n}\rVert^{(d-1)/2}}  +(\delta/R^{\omega})\\
				&\ll_{f} R^{-\gamma'}\cdot(\delta)^{-L(d+1)/2} +(\delta/R^{\omega}).
			\end{split}
		\end{equation}
		Picking $\delta$ so that
		\begin{equation*}
			\delta^{1+L(d+1)/2}=R^{-\gamma'},
		\end{equation*}
		or equivalently
		\begin{equation*}
			\delta=R^{-\frac{\gamma'}{\frac{d^2+3d+4}{2}}},
		\end{equation*}
		shows that picking any $\omega=R^{1-\frac{2\gamma'}{d^2+3d+4}+\epsilon}$ for any $\epsilon>0$, gives an effective estimate in~\eqref{eq:annu-estimate}, so one may pick
		$\omega_{\text{critical}}=1-\frac{2\gamma'}{d^2+3d+4}$.
	\end{proof}

	\begin{rem}
		Another way to prove the preceding result, in a more geometrical fashion, is to cover the annulus by balls of radius $R^{\omega}$, and deduce the result from individual equidistribution of each such ball.
	\end{rem}
	
	We end this section by showing another application of Venkatesh's estimate, towards a singular Bochner-Riesz mean.
	We recall the following definitions from harmonic analysis.
	\begin{defn}
		The Bochner-Riesz mean of order $\alpha\geq 0$ for a a function $F\in C_{c}(\mathbb{R})$ is defined as
		\begin{equation*}
			BR_{1,\alpha}F(x) = \frac{1}{I^{\alpha}_{1}}\int_{\lVert v \rVert \leq 1} \left(1-\lVert v \rVert^{2} \right)^{\alpha}_{+} \cdot F(x+v)dv.
		\end{equation*}
		
		The related summation kernel is the following~\cite[\S4.4 Theorem~4.15]{stein-weiss}
		\begin{equation*}
			\hat{BR_{1,\alpha}}(z) = \pi^{-\alpha}\Gamma(1+\alpha)\lVert z \rVert^{-n/2-\alpha}\cdot J_{n/2+\alpha}(\lVert z \rVert),
		\end{equation*}
		where $\Gamma$ stands for the Gamma function, and $I_{1}^{\delta}$ is a proper normalization factor.
		We may define via this kernel the Bochner-Riesz mean of order $\alpha$ by 
		\begin{equation*}
			BR_{R,\alpha}F(x) = \frac{1}{I_{1}^{\alpha}}\int_{\lVert v \rVert \leq 1}(1-\lVert v \rVert^{2})^{\alpha}_{+}\cdot F(R\cdot(x+v))dv.
		\end{equation*}

		For $\alpha=0$, the resulting averages are the same as the ball averages. By means of analytic continuation of the Gamma functions and the Bessel function one may deduce that for $\alpha=-1$, the resulting averages are the same of the sphere averages (by considering the associated Fourier multipliers in a distributional sense). Hence varying $\alpha$ between $0$ and $-1$ amounts to interpolating the averaging between the ball average and the sphere average, leading to sparser averages as $\alpha$ goes to $-1$.
	\end{defn}
	In multivariate harmonic analysis, the Bochner-Riesz kernels for $\alpha>~0$ take a prominent role, as the ball averages do not always converge for various $L^{p}$ spaces~\cite[\S4.4]{stein-weiss}
	
	We have the following theorem regarding Bochner-Riesz means of negative order in abelian horospheres
	\begin{thm}\label{thm:Bocher-Riesz}
		Let $X=G/\Gamma$, where $G$ is a linear semisimple Lie group, and $\Gamma$ is a lattice in G.
		Let $H\leq G$ be a horospherical group with respect to an $\mathbb{R}$-diagonalizable element, which we that $H$ is abelian, $H\simeq \mathbb{R}^d$ for $d\geq 2$.
		Then for every $x\in X$ which is $H$-generic, and equidistributes with a rate $\gamma>0$ with respect to functions with finite $K$-Sobolev norm, for any $\alpha>-1$, and any smooth and bounded function $f$ with finite $K$-Sobolev norm we have
		\begin{equation*}
			\left\lvert BR_{R,\alpha}f(x)-\int_{X}fdm \right\rvert \ll_{f} R^{-\gamma'},
		\end{equation*}
		for some $\gamma'=\gamma'(\gamma)>0$.
	\end{thm}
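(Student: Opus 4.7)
The plan is to follow the template of the proof of Theorem~\ref{thm:annuli-equi}, replacing the annulus operator $A_{R,\omega}$ by the Bochner-Riesz operator $BR_{R,\alpha}$. After subtracting the mean we may assume $\int_X f\, dm = 0$, so Theorem~\ref{thm:venkatesh} yields $|a_{\bar n, R}| \ll_f R^{-\gamma'}$ uniformly in $\bar n \in \mathbb{Z}^d$ for the Fourier coefficients of the model $F_R$. Writing $K_\alpha(v) = (1-\|v\|^2)_+^\alpha / I_1^\alpha$ for the rescaled Bochner-Riesz density on $\mathbb{R}^d$, one has
\begin{equation*}
BR_{R,\alpha} f(x_0) = \int_{\mathbb{R}^d} K_\alpha(v)\, F_R(v)\, dv = \sum_{\bar n \in \mathbb{Z}^d} a_{\bar n, R}\, \hat{K}_\alpha(-\bar n/2),
\end{equation*}
so the only ingredients of the annular argument that require replacement are the smoothing error bound and the large-frequency decay of the averaging kernel.

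For the smoothing step I mollify by $\phi_\delta$: the induced error is bounded by $\|f\|_\infty\cdot \|K_\alpha - K_\alpha * \phi_\delta\|_{L^1(\mathbb{R}^d)} \ll \omega_1(K_\alpha,\delta)$, the $L^1$-modulus of continuity. Since $K_\alpha$ is supported on the closed unit ball and its only singularity is a Hölder blow-up of order $(1-\|v\|)^\alpha$ on the unit sphere, a decomposition into coordinates aligned to the translation direction reduces the estimate to the one-dimensional bound $\int_{\mathbb{R}}|(1-r^2)_+^\alpha - (1 - (r+h)^2)_+^\alpha|\, dr \ll h^{1+\alpha}$ (for $\alpha \in (-1, 0]$, $h \to 0^+$) weighted tangentially by $\int_{\|v_\perp\|<1}(1-\|v_\perp\|^2)^{\alpha/2}\, dv_\perp$, which converges exactly because $\alpha/2 > -1/2$. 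This yields
\begin{equation*}
\bigl| BR_{R,\alpha}f(x_0) - (BR_{R,\alpha}*\phi_\delta)f(x_0)\bigr| \ll_f \delta^{1+\alpha}.
\end{equation*}

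For the main term, the explicit formula $\hat{K}_\alpha(z) = \pi^{-\alpha}\Gamma(1+\alpha)\|z\|^{-d/2-\alpha} J_{d/2+\alpha}(\|z\|)$ combined with the Bessel bound $|J_\nu(r)|\ll r^{-1/2}$ for $r\geq 1$ gives $|\hat{K}_\alpha(z)| \ll \|z\|^{-(d+1)/2-\alpha}$ for $\|z\|\geq 1$. Applying Proposition~\ref{prop:truncation} to the smoothed operator restricts the Fourier series to $\|\bar n\|\leq \delta^{-L}$ at an additional cost $O_f(\delta) = O_f(\delta^{1+\alpha})$, and dyadic counting of lattice points then yields (in the principal range $\alpha \in (-1, (d-1)/2)$)
\begin{equation*}
\Bigl|\sum_{1\leq\|\bar n\|\leq\delta^{-L}} a_{\bar n, R}\hat{K}_\alpha(-\bar n/2)\Bigr| \ll_f R^{-\gamma'}\sum_{N=1}^{\delta^{-L}} N^{(d-3)/2-\alpha} \ll_f R^{-\gamma'}\delta^{-L((d-1)/2-\alpha)}.
\end{equation*}
Collecting, $|BR_{R,\alpha}f(x_0)| \ll_f R^{-\gamma'}\delta^{-L((d-1)/2-\alpha)} + \delta^{1+\alpha}$; choosing $\delta = R^{-\gamma'/(1+\alpha + L((d-1)/2-\alpha))}$ equalizes the two terms and produces the power-saving bound $|BR_{R,\alpha}f(x_0)|\ll_f R^{-\gamma''}$ with
\begin{equation*}
\gamma'' = \frac{\gamma'(1+\alpha)}{1+\alpha + L((d-1)/2-\alpha)} > 0,
\end{equation*}
the positivity using exactly the hypothesis $\alpha > -1$; as $\alpha \to -1^+$ one has $\gamma'' \to 0$, reflecting the genuine obstruction to handling the pure sphere case $\alpha = -1$ by this method.

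The main obstacle is verifying the $L^1$-modulus bound $\omega_1(K_\alpha, \delta) \ll \delta^{1+\alpha}$ in $d$ dimensions for $\alpha \in (-1, 0)$: the one-dimensional version is classical (with $\int_0^\delta r^\alpha\, dr \sim \delta^{1+\alpha}$ capturing the singular-endpoint contribution), but its upgrade requires the tangential convergence $\int(1-\|v_\perp\|^2)^{\alpha/2}\, dv_\perp < \infty$, which is precisely the hypothesized range $\alpha > -1$. Once this is in place, the remainder is a mechanical adaptation of the chain of estimates~\eqref{eq:annu-estimate}; the complementary range $\alpha\geq (d-1)/2$ (when it occurs) is immediate since the Fourier tail sum is then $O(1)$ and no $\delta$-optimization is required.
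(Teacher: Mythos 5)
Your proposal is correct and follows essentially the same route as the paper's proof: Fourier expansion of the model $F_R$, the disjointness bound $\lvert a_{\overline{n},R}\rvert \ll_f R^{-\gamma'}$, mollification by $\phi_\delta$ with error $O_f(\delta^{1+\alpha})$, truncation to $\lVert\overline{n}\rVert\le\delta^{-L}$ via non-stationary phase, the Bessel decay $\lvert\hat{K}_\alpha(z)\rvert\ll\lVert z\rVert^{-(d+1)/2-\alpha}$, and the same optimization in $\delta$, giving the exponent $\gamma'(1+\alpha)/\bigl(1+\alpha+L((d-1)/2-\alpha)\bigr)$, which matches the paper's before it substitutes $L=d+2$. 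The only (immaterial) difference is that you control the smoothing error through the $L^1$-modulus of continuity of the kernel with a slicing argument, while the paper bounds it directly by the kernel mass in the shell $1-\delta\le\lVert v\rVert\le 1$; both yield $\delta^{1+\alpha}$.
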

	
	In order to bypass certain convergence issues (as the kernel of the Bochner-Riesz for this range of exponents is not $L^{1}$-integrable), we introduce a smoothing operator $\phi_{\delta}$.
	The following quantifies the effect of the smoothing operator over the Bochner-Riesz mean.
	\begin{prop}[Smoothing effect]
		For $\alpha>-1$, we have the following estimate
		\begin{equation*}
			\left\lvert BR_{R,\alpha}f(x_0)-BR_{R,\alpha}\star\phi_{\delta}f(x_0)\right\rvert \leq O_{f}\left(\delta^{\alpha+1}\right).
		\end{equation*}
	\end{prop}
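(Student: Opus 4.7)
The plan is to reduce the question to an $L^1$-estimate for the Bochner--Riesz kernel on $\mathbb{R}^d$ and its mollification, and then to exploit the explicit form of the kernel near the boundary of the unit ball; the dynamics of $u_v$ on $X$ play no role beyond the trivial bound $|f|\leq \|f\|_\infty$. First I would unwind the definitions: set $K_\alpha(v):=\frac{1}{I_1^\alpha}(1-\|v\|^2)^\alpha_+$, so that $BR_{R,\alpha}f(x_0)=\int_{\mathbb{R}^d} K_\alpha(v)\,f(u_{R\cdot v}.x_0)\,dv$, and interpret $BR_{R,\alpha}\star\phi_\delta$ as the operator with kernel $K_\alpha\star\phi_\delta$ (parallel to the usage of $A_{R,\omega}\star\phi_\delta$ above). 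Then Fubini yields
$$BR_{R,\alpha}f(x_0)-(BR_{R,\alpha}\star\phi_\delta)f(x_0)=\int_{\mathbb{R}^d}\bigl(K_\alpha-K_\alpha\star\phi_\delta\bigr)(v)\,f(u_{R\cdot v}.x_0)\,dv,$$
whose absolute value is bounded by $\|f\|_\infty\,\|K_\alpha-K_\alpha\star\phi_\delta\|_{L^1(\mathbb{R}^d)}$.

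Next I would reduce to a translation estimate. Writing
$$K_\alpha(v)-(K_\alpha\star\phi_\delta)(v)=\int_{\mathbb{R}^d}\phi_\delta(w)\bigl(K_\alpha(v)-K_\alpha(v-w)\bigr)\,dw$$
and applying Minkowski's inequality together with $\int\phi_\delta=1$, it suffices to prove
$$\sup_{\|w\|\leq\delta}\|K_\alpha-\tau_w K_\alpha\|_{L^1(\mathbb{R}^d)}\lesssim \delta^{\alpha+1},$$
where $\tau_w g(v):=g(v-w)$. The question is now purely about the scalar radial profile $(1-\|v\|^2)^\alpha_+$.

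The main step is this modulus-of-continuity estimate, which I would carry out by splitting $\mathbb{R}^d$ into the annular collar $C_\delta:=\{v:|1-\|v\||\leq 2\delta\}$ and its complement. On $C_\delta$, polar coordinates give
$$\int_{C_\delta}K_\alpha(v)\,dv\ \lesssim\ \int_0^{2\delta}r^\alpha\,dr\ \lesssim\ \delta^{\alpha+1},$$
and the same bound holds, up to a constant, for $\int_{C_\delta}\tau_w K_\alpha(v)\,dv$, since a translate of $C_\delta$ by $\|w\|\leq\delta$ is contained in $C_{3\delta}$. On the complement, $K_\alpha$ is smooth with $|\nabla K_\alpha(v)|\lesssim (1-\|v\|)^{\alpha-1}$, so the mean value theorem combined with polar coordinates gives
$$\int_{\|v\|\leq 1-2\delta}|K_\alpha(v)-K_\alpha(v-w)|\,dv\ \lesssim\ \delta\int_0^{1-2\delta}(1-r)^{\alpha-1}\,dr\ \lesssim\ \delta\cdot\delta^\alpha\ =\ \delta^{\alpha+1}.$$

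The subtlety worth flagging is that the hypothesis $\alpha>-1$ is used twice, in opposite directions: it makes the collar integral $\int_0^\delta r^\alpha\,dr$ convergent, while at the same time making the interior gradient integral $\int_0^{1-2\delta}(1-r)^{\alpha-1}\,dr$ diverge precisely as $\delta^\alpha$, so that the extra factor of $\delta$ from the mean value theorem matches the collar bound and produces the stated exponent $\delta^{\alpha+1}$. This balance is exactly why the proposition is stated on the range $\alpha>-1$.
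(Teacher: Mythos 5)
Your proposal is correct on the range that matters ($-1<\alpha\le 0$), and its core is the same computation as the paper's: the singular collar of the kernel near the unit sphere contributes $\int_0^{\delta}r^{\alpha}\,dr\asymp\delta^{\alpha+1}$, and $f$ enters only through $\lVert f\rVert_{\infty}$. The difference is in how the mollified operator is read and, consequently, in the decomposition. You interpret $BR_{R,\alpha}\star\phi_{\delta}$ as the operator whose kernel is $K_{\alpha}\star\phi_{\delta}$, which is in fact the interpretation used downstream (the truncation step and the final estimate pair $\hat{B}_{R,\alpha}(\overline{n})\hat{\phi}_{\delta}(\overline{n})$ against the Fourier coefficients of $F_R$); this forces you to control, besides the collar, an interior term via the gradient bound $\lvert\nabla K_{\alpha}\rvert\lesssim(1-\lVert v\rVert)^{\alpha-1}$ and the mean value theorem, and the two terms balance to $\delta\cdot\delta^{\alpha}=\delta^{\alpha+1}$ exactly because $\alpha<0$. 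The paper instead writes the smoothed average by translating the whole integrand $(1-\lVert v\rVert)^{\alpha}_{+}F_{R}(v)$ under the mollifier, so the discrepancy is supported purely in the boundary collar $1-\delta\le\lVert v\rVert\le 1$ and only the collar estimate is needed; this is shorter but proves the bound for a slightly different smoothed object than the one used in the Fourier-side computation, so your version arguably closes that small mismatch. Two minor caveats on your write-up: for $\alpha\ge 0$ your interior term is only $O(\delta)$ (the integral $\int_0^{1-2\delta}(1-r)^{\alpha-1}dr$ no longer diverges), so your argument yields $O(\delta)$ rather than the stated $O(\delta^{\alpha+1})$ there --- harmless for the application, where the regime of interest is $-1<\alpha\le 0$ and the other error terms are already of size $\delta$; and in the mean value step one should note that along the segment from $v-w$ to $v$ with $\lVert v\rVert\le 1-2\delta$, $\lVert w\rVert\le\delta$, one has $1-\lVert\cdot\rVert\ge(1-\lVert v\rVert)/2$, so the gradient bound you invoke is uniform on the segment, which you use implicitly.
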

	\begin{proof}
		Explicitly computing the difference we have
		\begin{equation*}
			\begin{split}
				&\left\lvert BR_{R,\alpha}f(x_0)-BR_{R,\alpha}\star\phi_{\delta}f(x_0)\right\rvert \\
				&\ \ =\bigg\lvert\frac{1}{I_{1}^{\alpha}}\int_{\lVert v \rVert=0}^{1}\big( (1-\lVert v \rVert)^{\alpha}_{+}\cdot F_{R}(v) \\
				&\ \ \ \ \ - \frac{1}{C_{d}\cdot\delta^{d}}\int_{\lVert h\rVert \leq \delta}(1-\lVert v+h \rVert)^{\alpha}_{+}\cdot F_{R}(v+h)\cdot\phi(h/\delta)d\lVert h\rVert\big)  d\lVert v\rVert \bigg\rvert.
			\end{split}
		\end{equation*}
		Therefore the difference is dominated by \begin{equation*}
			\begin{split}
				&\frac{4}{I_{1}^{\alpha}}\int_{1-\delta \leq \lVert v \rVert\leq 1}(1-\lVert v \rVert^{2})^{\alpha}_{+}\cdot \lVert f \rVert_{\infty}\cdot \lVert \phi \rVert_{\infty} d\lVert v\rVert\\ 
				&\leq \frac{8}{I_{1}^{\alpha}}\int_{1-\delta \leq \lVert v \rVert\leq 1}(1-\lVert v \rVert)^{\alpha}_{+}\cdot \lVert f \rVert_{\infty}\cdot \lVert \phi \rVert_{\infty}d\lVert v\rVert \\
				&\ll_{f} \frac{8}{I_{1}^{\alpha}}\int_{\lVert v \rVert \leq \delta}\lVert v \rVert^{\alpha}d\lVert v \rVert \\
				&\ll_{f} \frac{8}{I_{1}^{\alpha}}\cdot \frac{\delta^{1+\alpha}}{1+\alpha} \\
				&\ll_{f} \delta^{1+\alpha}.
			\end{split}
		\end{equation*}
	\end{proof}
	We note that the assumption $\alpha>-1$, which amounts to $\alpha+1>0$, was crucial in the previous computation, as $\alpha+1=0$ would lead to a logarithmic singularity, and in particular, the resulting error \emph{will not} tend to $0$ as $\delta$ approaches $0$.
	
	Using the smoothing operator, we may effectively truncate the Fourier series summation.
	\begin{prop}[Truncation effect]
		In the same settings as above, there exists some $L=L(d)>0$ such that 
		\begin{equation*}
			\left\lvert (B_{R,\alpha}\star\beta_{\delta})\left(\sum_{\lVert \overline{n}\rVert \geq \delta^{-L}} a_{\overline{n},R}e_{\overline{n}}(v)\right) \right\rvert \ll_{f} \delta.
		\end{equation*}
	\end{prop}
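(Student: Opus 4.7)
The plan is to carry this out in direct parallel with Proposition~\ref{prop:truncation}, the analogous truncation statement for the annular average $A_{R,\omega}$. First I would use the convolution property of the Fourier transform to write, for each Fourier mode,
\begin{equation*}
    (BR_{R,\alpha}\star\phi_{\delta})\, e_{\overline{n}}(v) \;=\; \widehat{BR_{R,\alpha}}(\overline{n})\cdot\widehat{\phi_{\delta}}(\overline{n}),
\end{equation*}
so that the left-hand side of the target estimate is bounded termwise by $|a_{\overline{n},R}|\cdot|\widehat{BR_{R,\alpha}}(\overline{n})|\cdot|\widehat{\phi_{\delta}}(\overline{n})|$.

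Next I would control each factor. For $|a_{\overline{n},R}|$, one has the trivial bound $\ll_{f} 1$ coming from the definition in \eqref{eq:Fourier-coefficient} (no disjointness input is needed at this stage; the point of Proposition~\ref{prop:truncation} is that the high-frequency tail is already negligible before any arithmetic information is invoked). For $|\widehat{\phi_{\delta}}(\overline{n})|$, since $\phi$ is a fixed standard mollifier and thus Schwartz, the non-stationary phase bound
\begin{equation*}
    |\widehat{\phi_{\delta}}(\overline{n})| \ll_{K} (\delta\cdot\lVert\overline{n}\rVert)^{-K}
\end{equation*}
holds for every $K>0$. For $|\widehat{BR_{R,\alpha}}(\overline{n})|$, I would use the explicit Bessel expression recorded just before Theorem~\ref{thm:Bocher-Riesz}, together with the scaling $\widehat{BR_{R,\alpha}}(z)=\widehat{BR_{1,\alpha}}(R\cdot z)$ and the standard asymptotic $|J_{\nu}(r)|\ll r^{-1/2}$, to obtain the bound
\begin{equation*}
    |\widehat{BR_{R,\alpha}}(\overline{n})| \ll \lVert R\cdot\overline{n}\rVert^{-(d+1)/2-\alpha}
\end{equation*}
once $\lVert R\cdot\overline{n}\rVert\geq 1$; in the regime of interest this is bounded by a constant (and in fact decays), which is all I need for the truncation proposition.

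Combining these estimates, the tail satisfies
\begin{equation*}
\begin{split}
    \left| (BR_{R,\alpha}\star\phi_{\delta})\Big(\sum_{\lVert\overline{n}\rVert\geq \delta^{-L}}a_{\overline{n},R}\,e_{\overline{n}}(v)\Big) \right|
    &\ll_{f} \sum_{\lVert\overline{n}\rVert\geq \delta^{-L}} (\delta\lVert\overline{n}\rVert)^{-K}
    \ll_{f} \delta^{L(K-d)-K}.
\end{split}
\end{equation*}
Choosing $K$ large (say $K=2d+2$) and then $L=L(d)$ large enough that $L(K-d)-K\geq 1$ yields the claimed bound by $\delta$. The only mildly delicate point I anticipate is keeping track of the dependence of the constants on $\alpha$ (in particular through the factor $\Gamma(1+\alpha)$ in $\widehat{BR_{1,\alpha}}$), but since $\alpha>-1$ is fixed this merely affects the implicit constant; no convergence issue arises, in contrast to the smoothing-error proposition where the constraint $\alpha>-1$ was genuinely used. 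I expect the main obstacle to be purely notational rather than mathematical — namely, matching the convention for $\phi_{\delta}$ versus $\beta_{\delta}$ as they appear in the statement and in the preceding smoothing lemma.
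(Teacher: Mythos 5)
Your proposal is correct and follows essentially the same route as the paper's proof: the convolution identity $(B_{R,\alpha}\star\beta_{\delta})e_{\overline{n}}=\hat{B}_{R,\alpha}(\overline{n})\hat{\beta}_{\delta}(\overline{n})$, the trivial bound $\lvert a_{\overline{n},R}\rvert\ll_{f}1$ together with boundedness of $\hat{B}_{R,\alpha}$, the non-stationary phase decay of the mollifier, and summation of the tail to get $\delta^{L(K-d)-K}\leq\delta$ for suitable $K,L$ depending only on $d$. The paper simply fixes $K=d+1$ and $L=K+1$ rather than your $K=2d+2$, and absorbs the Bessel-kernel decay of $\hat{B}_{R,\alpha}$ entirely into the implied constant, but these are immaterial differences.
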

	
	\begin{proof}
		The proof follows the lines of Proposition~\ref{prop:truncation}.
		By computation we have that
		\begin{equation*}
			\begin{split}
				(B_{R,\alpha}\star\beta_{\delta})e_{\overline{n}}(v) &= \widehat{(B_{R,\alpha}\star\beta_{\delta})}(\overline{n})\\
				&= \hat{B_{R,\alpha}}(\overline{n}) \cdot \hat{\beta_{\delta}}(\overline{n}),
			\end{split}
		\end{equation*}
		by the convolution property of Fourier transform.
		By the non-stationary phase method, we have that for any $K>0$
		\begin{equation*}
			\lvert \hat{\beta_{\delta}}(\overline{n}) \rvert \ll_{K} (\delta\cdot \lVert \overline{n} \rVert)^{-K}.
		\end{equation*}
		Choosing $K=d+1$ in turn gives the following estimate
		\begin{equation*}
			\begin{split}
				\left\lvert (B_{R,\alpha}\star\beta_{\delta})\left(\sum_{\lVert \overline{n}\rVert \geq \delta^{-L}} a_{\overline{n},R}e_{\overline{n}}(v)\right) \right\rvert &\leq \sum_{\lVert \overline{n}\rVert \geq \delta^{-L}} \lvert a_{\overline{n},R}\rvert \left\lvert \hat{B_{R,\alpha}}(\overline{n}) \cdot \hat{\beta_{\delta}}(\overline{n}) \right\rvert \\
				&\ll_{f} \sum_{\lVert \overline{n}\rVert \geq \delta^{-L}}(\delta\cdot \lVert \overline{n} \rVert)^{-K} \\
				&\ll_{f} \delta^{L-K},
			\end{split}
		\end{equation*}
		where picking $L=K+1$, proves the claimed estimate.
	\end{proof}
	
	\begin{proof}[Proof of Theorem~\ref{thm:Bocher-Riesz}]
		Continuing in a similar fashion to the proof of Theorem~\ref{thm:annuli-equi}, we have the following estimates
		\begin{equation*}
			\begin{split}
				\lvert B_{R,\alpha}f(x_0)\rvert &\ll_{f} \lvert B_{R,\alpha}\star \beta_{\delta}f(x_0) \rvert + \delta^{\alpha+1} \\
				&\ll_{f} \left\lvert \sum_{\overline{n}\in\mathbb{Z}^{d}}a_{\overline{n},R}\cdot  \hat{B_{R,\alpha}}(\overline{n}) \cdot \hat{\beta_{\delta}}(\overline{n}) \right\rvert + \delta^{\alpha+1} \\
				&\ll_{f} \sum_{\lVert\overline{n}\rVert\leq\delta^{-L}}\lvert a_{\overline{n},R}\rvert\cdot  \lvert\hat{B_{R,\alpha}}(\overline{n})\rvert + \delta+\delta^{\alpha+1} \\
				&\ll_{f} R^{-\gamma'}\cdot \sum_{\lVert\overline{n}\rVert\leq\delta^{-L}} \frac{1}{\lVert\overline{n}\rVert^{(d+1)/2+\alpha}} + \delta^{\alpha+1} \\
				&\ll_{f} R^{-\gamma'}\cdot \delta^{-L\cdot\left(\frac{d-1}{2}-\alpha\right)} + \delta^{\alpha+1}.
			\end{split}
		\end{equation*}
		Optimizing, we have
		\begin{equation*}
			\begin{split}
				\delta &= R^{-\frac{\gamma'}{1+\alpha+L\cdot\left(\frac{d-1}{2}-\alpha\right)}} \\
				&= R^{-\frac{\gamma'}{1+\alpha+(d+2)\cdot\left(\frac{d-1}{2}-\alpha\right)}} \\
				&= R^{-\frac{\gamma'}{(d+1)(d-\alpha)}},
			\end{split}
		\end{equation*}
		which leads to the following decay estimate
		\begin{equation*}
			\lvert B_{R,\alpha}f(x_0)\rvert \ll_{f} R^{-\frac{\gamma'(1+\alpha)}{(d+1)(d-\alpha)}}.
		\end{equation*}
	\end{proof}

	\begin{rem}
		In some very specialized cases, one may actually get $\alpha=-1$ (namely quantitative equidistribution of spheres), but those results seems to make usage of highly specialized structures not available in the general case.
		The main example was studied (in a related problem dealing with equidistribution of translates) by Ubis in~\cite{ubis2016effective}. The following is a simplified presentation of the dynamical side of his argument.
		Consider $G=PSL_{2}(\mathbb{R})$, and $\Gamma \leq G$ be a lattice, which for simplicity we will assume to be uniform.
		By Burger's quantification of Furstenberg's unique ergodicity~\cite[Theorem~$2$]{burger1990} result, we have the following equdistribution statement for $f\in C^{4}(G/\Gamma)$ with vanishing integral
		\begin{equation*}
			\left\lvert\frac{1}{T}\int_{t=0}^{T}f(u_{t}.x_0)dt \right\rvert \ll_{f} T^{-s},
		\end{equation*}
		where $s$ is any number smaller than the spectral gap of the $G$ action on $L^{2}_{0}(G/\Gamma)$.
		Define the integer $d$ to be the least integer so that $d\cdot s>1$.
		Consider the space $X=\prod_{i=1}^{d}\left(G/\Gamma\right)$. This space carries an $\mathbb{R}^{d}$-action by unipotent translations as the various unipotent flows $u_{t}$ on each individual factor $G/\Gamma$ act independently on the direct product space $X$.
		We claim that in this setting, we do have equidistribution of the sphere averages.
		A general nice function with vanishing integral in $L^{2}_{0}(X)$, may approximated by the following tensor products $\otimes_{i=1}^{d}f_{i}$, where each $f_{i}$ is a nice function in $L^{2}_{0}(G/\Gamma)$.
		If needed we may subtract the integral of $f_{i}$ (along the factor) from itself, and so we may assume that either $f_{i}$ is a constant function, or a function of vanishing integral.
		In the case where all the components $f_{i}$ are of vanishing integral, writing the ergodic average along the balls in $\mathbb{R}^{d}$ per the components, utilizing Burger's formula via a Fubini argument, we have that
		\begin{equation*}
			\left\lvert\frac{1}{R^{d}}\int_{\lVert v \rVert \leq R}f(u_{v}.x)dv \right\rvert \ll_{f} R^{-d\cdot s}.
		\end{equation*}
		As $d\cdot s>1$, a thickening estimate of the of the sphere (of length say $\delta$), shows that the sphere average is well approximated (up to an error of $O_{f}(\delta)$ coming from a Lipschitz estimate) by $\frac{1}{R^{d-1}}\int_{R\leq \lVert v \rVert \leq R+\delta}f(u_{v}.x)dv$, which itself get bounded effectively, using Burger's quantitative equidistribution theorem by $$2\cdot (R+\delta)^{d-d\cdot s}/R^{d-1} = O\left(R^{1-d\cdot s}\right),$$
		where we suppress multiplicative errors depending on the regularity of $f$.
		As $d\cdot s>1$, this estimate decays effectively.
		In the other case, where at-least one of the $f_{i}$'s is constant, one may project the tensor function $\otimes_{i=1}^{d}f_{i}$ to the product space $\prod_{i}'(G/\Gamma)$, where the prime means we remove the constant factor.
		Note that the spherical measure in $\mathbb{R}^{d}$ gets projected to an absolutely-continuous measure over the ball in $\mathbb{R}^{d-1}$, for which one may use the generalized quantitative horospherical equidistribution theorem (c.f. \cite[Theorem~$2$]{burger1990},\cite[Lemma $9.4$]{venkatesh10},\cite{katz}) over the factor space, in order to conclude effective equidistribution over the factor (and hence over the whole product).
		It seems this technique cannot be generalized immediately to the $SO(d,1)$ case discussed previously, due to the presence of the $M$-group (or more specifically, the non-abelian maximal compact subgroup $K$, which includes the $M$ group as a subgroup, and does not allow an ''individual`` separation of the axis involved).
		Nevertheless, this technique can be generalized into other averages along suitably curved surfaces (of small enough codimension) 
		the projections of those curves would become absolutely-continuous measures over the factor spaces as discussed above.
	\end{rem}

	\section{Joinings over nilmanifolds}
	We remark here briefly about the relationship between joinings and analysis over nilmanifold.
	Given a group $H$ acting on a nilmanifold $N/\Lambda$ by translations, the basic object one is interested in is the $H$ nilsequence defined by a given function $f:N/\Lambda\to\mathbb{R}$ and a basis point $x_0\in N/\Lambda$, which is the sampling sequence of $f$ along the orbit $H.x_0$.
	By fundamental properties of nilflows, one may only consider a specific subclass of functions $f$, so that the $H$-nilsequence is actually an $H$-nilcharacter.
	For a given $H$-nilcharacter $f$, when one tries to study its ergodic average $\int_{h} f(h.x_0)dh$, one may use the Van-der-Corput trick in order to study the closely-related average
	\begin{equation*}
		\int_{h}\int_{h'}f(h'.h.x_0)\cdot\overline{f(h.x_0)}dhdh'.
	\end{equation*}
	By the basic properties of nilcharacters, the ''derived expression`` $$f(h'.h.x_0)\cdot\overline{f(h.x_0)}$$ (which can be thought of as an evaluation of the function $(h'.f) \cdot \overline{f}$ over an $H$-orbit contained in an $H$-joining) may be realized over a subnilmanifolds (of strictly smaller complexity).
	One may repeat this van-der-Corput smoothing trick, each time reducing the complexity of the ''differentiated expression`` until one reaches to classical analysis over the horizontal torus embedded in the nilmanifold, where one may apply  classical techniques from Fourier analysis.

	Let $X=N/\Lambda$, we define the horizontal torus associated to $X$ as $X_{ab}=N/[N,N]\Lambda$. We assume that $\dim X_{ab}=p$ and identify its dual group (the group of horizontal characters) with $\mathbb{Z}^p$.
	L. Green's theorem, have been generalized by Green-Tao in the following effective version~\cite[Theorem~$8.6$]{greentao12}:
	\begin{thm}[Green-Tao]\label{thm:green-tao}
		Let $0<\delta<1/2$ and let $d,R\geq 1$ and a given nilmanifold $N/\Lambda$ equipped with an $\mathbb{R}^{d}$ action by translations.
		Then either for all Lipschitz functions $f:N/\Lambda \to \mathbb{R}$ the following holds:
		\begin{equation*}
			\left\lvert \frac{1}{(2R)^{d}}\sum_{v\in [-R,R]^{d}}f(v.x_0)dv - \int_{N/\Lambda}fdm \right\rvert \ll_{f} \delta,
		\end{equation*}
		or there exists an horizontal character $z\in\widehat{\left(N/\Lambda\right)_{ab}}$, $C_{1}=C_{1}(N,d)$ and $C_{2}=C_{2}(N,d)$ so that $\lVert z \rVert \ll \delta^{-C_{1}}$ and
		\begin{equation*}
			\lvert \left<z,\psi(v.x) \right> \rvert \leq \delta^{-C_{2}}.
		\end{equation*}
	\end{thm}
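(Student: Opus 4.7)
The plan is to prove this by induction on the step $s$ of the nilpotent group $N$, following the strategy Green and Tao pioneered for the polynomial case. In the base step $s=1$, the nilmanifold is already the horizontal torus $X_{ab}\simeq \mathbb{T}^p$, and the statement reduces to the quantitative Weyl equidistribution theorem for an $\mathbb{R}^d$-action by translations on a torus: either the ergodic average of a Lipschitz $f$ is within $\delta$ of $\int f\,dm$, or else, by looking at the Fourier expansion of $f$ and noting that a failure of smallness of one Fourier mode $\hat f(z)$ propagates into a failure of smallness of the exponential sum $\frac{1}{(2R)^{d}}\int_{[-R,R]^d} e(\langle z,\psi(v.x)\rangle)\,dv$, one extracts a character $z$ of bounded height with $\lvert\langle z,\psi(v.x)\rangle\rvert$ small as required.

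For the inductive step, I would first decompose $f$ by vertical Fourier decomposition with respect to the action of the center $Z(N)$: write
\begin{equation*}
    f=\sum_{\xi} f_\xi,
\end{equation*}
where $f_\xi$ transforms by the central character $\xi$. The zero-character component $f_0$ descends to a function on the quotient nilmanifold $N/Z(N)\Lambda$, which has strictly smaller step, so the inductive hypothesis applies directly and gives either equidistribution or a horizontal character obstruction (horizontal characters of $N/Z(N)\Lambda$ pull back to horizontal characters of $N/\Lambda$). For the nonzero vertical characters $\xi\neq 0$, I would apply the van der Corput trick in the $\mathbb{R}^d$-variable: Cauchy--Schwarz on $\frac{1}{(2R)^{d}}\int_{[-R,R]^d} f_\xi(v.x_0)\,dv$, followed by introducing a short shift $h$ averaged over $[-H,H]^d$, produces a bound in terms of averages of $f_\xi(v.h.x_0)\overline{f_\xi(v.x_0)}$. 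The key algebraic fact, coming from the commutator identities in the nilpotent group, is that this product function, for fixed $h$, is $Z(N)$-invariant (the central twists cancel), and hence descends to a Lipschitz function on the lower-step quotient $N/Z(N)\Lambda$. Applying the induction hypothesis to this derived function then yields either the desired smallness or a horizontal character on the quotient.

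The final step is to upgrade the obstruction character on $N/Z(N)\Lambda$ (obtained on the derived system parametrized by the shift $h$) back to an obstruction character on the original $X_{ab}$. This is done by a pigeonhole argument on $h\in[-H,H]^d$: if a positive proportion of $h$ produce the same (or a bounded collection of) obstruction character on the quotient, one differences again in $h$ to realize the obstruction as a genuine horizontal character $z\in\mathbb{Z}^p$ of $X_{ab}$ evaluated on $\psi(v.x_0)$, controlling the height by $\delta^{-C_1}$ and the sup-norm by $\delta^{-C_2}$ at the cost of worsening the exponents $C_1,C_2$ at each inductive stage.

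The main obstacle is the bookkeeping in the inductive stage: one must track the quantitative dependence of the constants $C_1,C_2$ under the van der Corput differencing (each step amplifies $\delta$ to a power of $\delta$) and must ensure that the derived nilmanifold on which one applies the induction is given with explicit Mal'cev coordinates so that Lipschitz norms, horizontal character heights, and Mal'cev bases all transfer in a controlled way. A secondary but nontrivial technical point is reconciling the continuous $\mathbb{R}^d$-averaging used in the statement with the more standard discrete polynomial setting of Green--Tao, which can be done by approximating the integral by Riemann sums at scale $\ll 1$ and absorbing the resulting error into the Lipschitz constant of $f$.
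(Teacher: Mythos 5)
First, a point of comparison: the paper does not prove this statement at all --- it is imported verbatim (with a citation to Theorem 8.6 of Green--Tao) and used as a black box, so there is no internal proof to measure your argument against. Your outline does follow the strategy of the cited source itself: induction on the step, vertical Fourier decomposition along the central (or last lower-central-series) torus, van der Corput differencing to push nonzero vertical frequencies down to a lower-step quotient, reduction of the base case to quantitative Weyl equidistribution on $X_{ab}$, and a final pigeonhole step to convert the quotient obstruction into a horizontal character of controlled height. That is the correct roadmap.

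As a proof, however, there is a genuine gap: everything that is actually difficult in Green--Tao is asserted rather than carried out. The decisive point is the step you dispatch in one sentence --- if for a positive proportion of shifts $h$ the derived average is obstructed by some horizontal character $z_h$ with $\lVert z_h\rVert\le\delta^{-O(1)}$ on the lower-step quotient, one must show the map $h\mapsto z_h$ is structured enough (after pigeonholing on its value) that a further differencing in $h$ produces a single horizontal character $z\in\mathbb{Z}^p$ of the original nilmanifold obstructing the original orbit, with only polynomial losses in $\delta$. In Green--Tao's treatment this occupies most of the work (their factorization into smooth, rational and totally equidistributed pieces, and the attendant Mal'cev-coordinate bookkeeping), and nothing in your sketch explains why the $h$-dependence closes. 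A second issue is the content of the obstruction itself: the correct conclusion is not a pointwise bound on $\lvert\langle z,\psi(v.x)\rangle\rvert$ but that the character sequence $v\mapsto\langle z,\psi(v.x)\rangle$ is slowly varying over all of $[-R,R]^d$ (a smoothness-norm bound); your base case produces a large exponential-sum mode, and passing from that to this structural statement with explicit constants is precisely the quantitative Weyl/Vinogradov-type lemma that must be proved, not assumed. Finally, the continuous multiparameter setting requires both your Riemann-sum reduction and Green--Tao's reduction from several parameters to one before the constants $C_1(N,d),C_2(N,d)$ are meaningful. In short: the approach matches the source the paper cites, but as written it is an outline of that proof rather than a proof.
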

	We note that Green-Tao achieved a much more general theorem dealing with polynomial sequences, generalizing an equidistribution theorem of Leibman.
	The equidistribution rates of Green-Tao, while not explicit, in principal can be extracted from their paper, and we demonstrate it in a special case later.
	The following definition is analogous to Definition~\ref{def:quantitative-equi-rate}.
	\begin{defn}\label{def:quantitatie-equi-rate-nil}
		We will say that $x_{0}\in N/\Lambda$ has \emph{effective equidistribution of rate $\gamma>0$} if for any smooth function $f$ on $N/\Lambda$ we have
		\begin{equation*}
			\left\lvert \frac{1}{(2R)^{d}}\sum_{v\in [-R,R]^{d}}f(v.x_0)dv - \int_{N/\Lambda}fdm \right\rvert \ll_{f} R^{-\gamma},
		\end{equation*}
		where the implied constant depends on some Sobolev norm of $f$.
	\end{defn}
	\begin{rem}\label{rem:effective-equi}
		In view of the strong subspace theorem and the above-mentioned theorem of Green-Tao, every point in $N/\Lambda$ defined over $\overline{Q}$ satisfy ``almost'' effective equidistribution theorem with a rate (it is only almost in the sense that the quantity $R_{0}$ cannot be effectively computed, due to the ineffective nature of the strong subspace theorem).
		Moreover, using Liouville's theorem, one may extract an effective (but not optimal) equidistribution theorem (c.f. the discussion in \cite[Section $5.2$]{katz2}). 
	\end{rem}
	
	Fix $\delta>0$.
	Using the approximation of the sphere introduced in Proposition~\ref{prop:approx} , which gave us extra averaging on the tangent plane, it is enough to study the following type of average
	\begin{equation}\label{eq:int-rewrite}
		\int_{m\in M}\frac{1}{\vol(B_{R^{\beta}}^{\perp})}\int_{v'\in B_{R^{\beta}}^{\perp}}f(m.u._{v'}.u_{R\cdot \hat{v}}.m^{-1}.x)dv'dm.
	\end{equation}
	where $f:N/\Lambda\to \mathbb{R}$ a Lipschitz function.
	It is enough to show that for most elements $m\in M$ (where most depends quantitatively on $\delta$), the inner integral is bounded by a function of $\delta$.
	
	As our function is assumed to be Lipschitz, by discretizing the average to a $\delta$-grid (at the cost of an error of $O_{f}(\delta)$), one simply needs to consider the averages along the discrete samples $v'\in \mathbb{Z}^{d-1}/\delta$ with $\lVert v' \rVert \leq R^{\beta}$.
	Moreover, one may restrict themselves to the case of averaging over a cube in the tangent space rather than a ball.
	
	We now may estimate the norms of the projections in the following manner:
	\begin{equation*}
		\begin{split}
			\lVert \left<z,\psi(u_{mv'}.u_{R\cdot\Hat{v}}.x) \right>\rVert_{v'} &= \lVert \left<z,\psi(u_{m.v'}) \right>\rVert_{v'} \\
			&= \lVert \sum_{i=1}^{p} z_{i}\cdot \psi(m.v')_{i}\rVert_{v'} \\
			&= \sup_{i=1,\ldots,p} \sup_{\lVert m.v' \rVert =1}\lvert z_{i}\cdot \psi(m.v')_{i}\rvert\cdot R^{\beta}/\delta.
		\end{split}
	\end{equation*}
	Denote $\tau:=\sup_{i=1,\ldots,p} \sup_{\lVert m.v' \rVert =1}\lvert z_{i}\cdot \psi(m.v')_{i}\rvert$.
	Hence we need to show that for most elements $m\in M$, we have that $\tau \geq \delta^{-C_{2}+1}\cdot R^{-\beta}$.
	We first note that by the equidistribution of the whole $H$-orbit, integration in polar coordinates and the homogeneity of the expressions, for almost any $m$, we must have $$\tau=\sup_{i=1,\ldots,p} \sup_{\lVert m.v' \rVert =1}\lvert z_{i}\cdot \psi(m.v')_{i}\rvert \neq 0.$$
	Moreover, for any $z\in \mathbb{Z}^{p}$ we have at-least one $\hat{e_{i}}\in\mathbb{R}^{d}$ for which
	\begin{equation*}
		\lvert\left<z,\psi(\hat{e_i})\right>\rvert \neq 0.
	\end{equation*}
	
	Therefore, for a fixed $z$, we may bound the set of directions $m\in M$ such that
	$\sup_{\lVert m.v' \rVert =1}\lvert z_{i}\cdot \psi(m.v')_{i}\rvert \leq \tau$ by $O\left(\frac{\tau}{\lVert z \rVert}\right)$ by a derivative estimate.
	
	Summing over the various characters $z\in \mathbb{Z}^p$ of weight less than $\delta^{-C_{1}}$, we see that the size of the exceptional set of directions, for which 
	\begin{equation*}
		\lvert\left<z,\psi(\hat{e_i})\right>\rvert \leq \tau
	\end{equation*}
	is bounded by $O(\tau \cdot \delta^{-C_{1}\cdot p})$.
	We denote this set of directions by $\mathcal{B}=~\mathcal{B}(\tau)$, and denote its complement by $\mathcal{A}$.
	
	Combining all those estimates we have
	\begin{equation}\label{eq:effective-equi-nil-estimate}
		\begin{split}
			&\left\lvert\int_{m\in M}\frac{1}{\vol(B_{R^{\beta}}^{\perp})}\int_{v'\in B_{R^{\beta}}^{\perp}}f(m.u._{v'}.u_{R\cdot \hat{v}}.m^{-1}.x)dv'dm\right\rvert \\
			&\ \ \leq \left\lvert\int_{m\in \mathcal{B} }\frac{1}{\vol(B_{R^{\beta}}^{\perp})}\int_{v'\in B_{R^{\beta}}^{\perp}}f(m.u._{v'}.u_{R\cdot \hat{v}}.m^{-1}.x)dv'dm\right\rvert \\
			&\ \ + \int_{m\in \mathcal{A} }\left\lvert\frac{1}{\vol(B_{R^{\beta}}^{\perp})}\int_{v'\in B_{R^{\beta}}^{\perp}}f(m.u._{v'}.u_{R\cdot \hat{v}}.m^{-1}.x)dv'\right\rvert dm \\
			&\ \ \ll_{f} \tau \cdot \delta^{-C_{1}\cdot p} + \delta = \delta^{-C_{2}-C_{1}\cdot p+1}/R^{\beta}+\delta.
		\end{split}
	\end{equation}
	Choosing $\delta=R^{-\frac{\beta}{C_{1}\cdot p +C_{2}}}$, gives an effective bound.
	
	We end by demonstrating an explicit bound for the case of $2$-step nilmanifold. We make usage of better bounds than the bounds achieved throughout the proof, as we know explicit bounds of decay of Fourier transforms of spheres, but we note here that one can extract (slightly weaker) bounds from the linear approximation argument we have demonstrated.
	
	Let $X=N/\Lambda$ be a $2$-step nilmanifold, equipped with an $\mathbb{R}^{d}$ minimal action by translations, such that $\dim X_{ab}=p$.
	One particular example to consider is the analogous case to Ubis~\cite{ubis2016effective}, namely let $\mathbb{H}$ be the  $3$-dimensional Heisenberg group, $H=\mathbb{H}(\mathbb{R})$ be the related real Lie group $\Lambda=\mathbb{H}(Z)\leq H$ be the subgroup of integral points of it and define $3d$-dimensional nilmanifold $X=(H/\Lambda)^{d}$.
	We note that as $H_{ab}$ is a $2$-dimensional torus, $X_{ab}$ is a  $2d$-dimensional torus.
	We can endow each factor with a minimal $\mathbb{R}$-action~\cite[Theorem~$1.6$]{greentao12} given by the flow along the subgroups $u_{i}\leq H$ generated by $u_{i}(t)=\exp\left( t\cdot \begin{pmatrix}
		0 & \alpha_{i} & \gamma_{i} \\
		0 & 0  & \beta_{i} \\
		0 & 0  & 0 
	\end{pmatrix}\right)$ with $1,\alpha_{i},\beta_{i}$ being rationally independent for $i=1,\ldots, d$, and the action is given by
	\begin{equation*}
		u_{i}(t).n=(n_{1},\ldots,u_{i}(t).n_{i},\ldots,n_{d}),
	\end{equation*}
	for all $n\in (H/\Lambda)^{d}$.
	
	In order to show quantitative equidistribution, it is enough to consider equidistribution along nilcharacters defined on $X$.
	Those nilcharacters have two types:
	\begin{itemize}
		\item The nilcharacter is abelian, meaning the nilcharacter factor through $[N,N]$, which in that case we identify the nilcharacter as a character of $X_{ab}$.
		\item The nilcharacter does not factor through $[N,N]$.
	\end{itemize}
	In the case of abelian character, one needs to consider
	\begin{equation*}
		\int_{\Hat{v}\in S^{d-1}}e(\left<z,\psi(R\cdot v'.x_0) \right>)d\Hat{v},
	\end{equation*}
	which is bounded by
	\begin{equation*}
		\hat{\sigma}(R\cdot \tau),
	\end{equation*}
	where $\sigma$ is the unit mass measure defined over $S^{d-1}$, and we define $\tau:=\max_{i=1,\ldots,p} \lvert z_i \cdot \psi(v'.x_0)_{i} \rvert$.
	In this case, due to the decay of the Fourier transform, we get a decay rate of $O\left((R\cdot \tau)^{-(d+1)/2} \right)$.
	
	In the other case to consider, one is given a rank $2$ nilcharacter.
	As described in~\cite[\S5]{greentao12}, using the van-der-Corput trick (which results in a square root loss in the decay rate), one needs to study the various differentiated terms
	\begin{equation*}
		f(\psi(h.v'.x_0))\cdot f(\psi(v'.x_0)),
	\end{equation*}
	which in-turn correlate with linear flow over the embedded torus, leading to using the same estimate we achieved above (with maybe at the cost of changing $\tau$ to $\tau\cdot R^{-0.5}$ and having an extra square root due to the van-der-Corput estimate), leading to an estimate of $O\left((R^{0.5}\cdot \tau)^{-(d+1)/4}\right)$.

	\bibliographystyle{plain}
	\bibliography{joining-equidistribution}
	
\end{document}